\documentclass{amsart}
\usepackage{graphicx}

\usepackage{amssymb}
\usepackage{mathtools}
\usepackage{kpfonts}
\usepackage[T1]{fontenc}
\usepackage{multicol}
\usepackage{mathrsfs}
\usepackage[dvipsnames]{xcolor}
\usepackage{multicol}
\usepackage{amsthm}
\usepackage{float}

\usepackage{bbding} 
\usepackage{tikz}
\usepackage{tikz-cd}
\usetikzlibrary{arrows,shapes,patterns,decorations.markings}
\usetikzlibrary{shapes.arrows}   
\usetikzlibrary{positioning}
\usepackage{mathrsfs}
\usepackage{amssymb}
\usepackage{mathtools}
\usepackage{chessfss}
\usepackage{xfrac}
\usepackage{setspace}

\usepackage{geometry}
\PassOptionsToPackage{obeyspaces}{url}
\usepackage[hyphens]{xurl}
\usepackage{algorithm}
\usepackage{algpseudocode}

\usepackage{subcaption}
\usepackage{float}

\definecolor{1}{rgb}{1,0.2,0.3}
\definecolor{2}{rgb}{0.1,0.3,0.5}
\definecolor{3}{rgb}{1,1,0}
\definecolor{4}{rgb}{255,255,255}

\usepackage[hyperref,
giveninits,
style=alphabetic,
backend=biber
]{biblatex}
\renewbibmacro{in:}{} 

\newtheorem{theorem}{Theorem}
\newtheorem{corollary}[theorem]{Corollary}
\newtheorem{lemma}[theorem]{Lemma}

\theoremstyle{definition}

\newtheorem{hint}{Hint}
\newtheorem{challenge}{Challenge}
\theoremstyle{remark}
\newtheorem*{remark}{Remark}

\definecolor{tr-pink}{RGB}{245,169,187}
\definecolor{tr-blue}{RGB}{91, 206, 250}

\newcommand{\polfont}[1]{\texttt{#1}}

\newcommand{\scfa}{.58} 
\newcommand{\triscfa}{.65} 
\newcommand{\hexscfa}{1}
\newcommand{\opacite}{.9}
\newcommand{\radis}{0pt}

\usepackage{LRMR_drawing_macros}

\usepackage[pdfpagemode=UseNone, pdfstartview={XYZ null null null}]{hyperref}
\hypersetup{
	colorlinks=true,
	citecolor=tr-pink,
	linkcolor=tr-blue,
	urlcolor=tr-pink
}

\makeatletter
\newcommand{\addresseshere}{%
	\enddoc@text\let\enddoc@text\relax
}
\makeatother
\usepackage{tcolorbox}
\begin{document}
	
	\title{Extremal fences with polyforms}

	\author[A. Langlois-Rémillard]{Alexis Langlois-R\'emillard}
	\address[A. Langlois-Rémillard]{Hausdorff Center for Mathematics, Endenicher Allee 62, 53115 Bonn, Germany}
	\email{alexis.langlois-remillard@tutanota.com}
	\author[M. N. Müßig]{Mia N. M\"u\ss{}ig}
	\address[M. Müßig ]{Ludwig-Maximilians-Universit\"at M\"unchen, Munich, Germany}
	\email{nienna@miamuessig.de}
	\author[\'E. Rold\'an]{\'Erika Rold\'an}
	\address[\'E. Rold\'an]{ Max Planck Institute for Mathematics in the Sciences, Inselstraße 22, Leipzig, Germany and ScaDS.AI, Leipzig, Humboldtstra\ss{}e 25, Leipzig, Germany}
	\email{roldan@mis.mpg.de}
	
	\begingroup 
	\let\MakeUppercase \relax 

	\begin{abstract}
	 We present results around an isoperimetric problem built on polyforms: What is the biggest enclosed area one can build using  polyforms in each of the three plane tessellations? We give challenges to the readers and present Shimauchi's proof of the biggest area a fence made of pentominoes can enclose. A translation of the instance using integer linear programming is also given. 
	
	A companion web app is available to test some of the challenges we propose and for activities, and we included extra pages with a cutout handout of the board and pieces of the puzzles, so that you can print and cut them to read this paper hands on.
	\end{abstract}
\maketitle
\endgroup
	
	\begin{center}
		\textit{To the memory of Sayan Mukherjee,}\\
		\textit{whose enthusiasm for the fence challenge and unwavering support made this research possible.}
	\end{center}\medskip

\begingroup
\centering
\begin{tcolorbox}[width=.87\textwidth]
		\noindent The following is an English version of the paper
		\begin{description}
			\item[\cite{LRMR25f}] \fullcite{LRMR25f}. 
		\end{description}	
		\noindent Since its publication, groups of students, researchers and enthusiasts have solved Challenges~\ref{challenge:hexomino_fence},~\ref{challenge:minmax}, and~\ref{challenge:pentocube}. We will update this arXiv version at the end of 2026 when a follow-up to this paper containing the solutions is published. In the meantime, feel free to try them! \medskip
\end{tcolorbox}
\endgroup
	
	\noindent \textbf{Hands-on first!} To enjoy the full potential of this paper, we invite you, the reader, to begin by printing and \hspace{0.2em}\ScissorRight\hspace{0.2em} the tetrominoes in Challenge~\ref{challenge:tetrominoes}, all the polyforms depicted in Figures~\ref{pentominoes}, \ref{fig:tetrahexes}, and~\ref{fig:hexiamonds}, along with the tessellation boards provided in Appendix~\ref{app:fences}.
	With these materials, you will be able to try the first round of challenges, before diving into the rest of the paper. Some of the answers are presented in the sections that follow;tetromino others are available upon request\footnote{Write to any of the authors.}.\medskip
	
	Prefer pixels to paper? Every challenge in this paper can also be
	played in your browser: interactive fence labs, one per tessellation,
	let you drag, rotate, flip and snap the pieces and watch the enclosed
	area grow. Build your fences at \url{https://erikaroldanroa.github.io/fence-challenge/}~\cite{FenceHub}.\medskip

	Although this paper presents new results in extremal discrete computational geometry and topology, along with several open problems, we have chosen to structure it in a way that invites participation from a broad audience, both inside and outside academia. The activities can be easily adapted for a variety of formats: from a five-minute stop at a science fair tent, to week-long workshops, or even yearlong projects at the high school or undergraduate level~\cite{LRRR24}. In later sections, we also include challenges that, while accessible to a wide audience, pose open problems suitable for graduate students and researchers, offering material for deeper study and future exploration.
				
	\begin{figure}[h]
		\centering
		\begin{tikzpicture}[scale=\scfa]
		\draw[help lines] (0,-1) grid (7,7);
		\pic[scale = \scfa,rotate=90] at (4,0){tetrominoL={1}};
		\pic[scale=\scfa,rotate=0] at (1,1) {tetrominoI};
		\pic[scale = \scfa,rotate =180] at (5,6) {tetrominoN={1}};
		\pic[scale = \scfa,rotate=270] at (4,6){tetrominoT};
		\pic[scale=\scfa,rotate=0] at (4,1) {tetrominoO};
		\foreach \x/\y in {2/1,2/2,2/3,3/2,3/3,4/3}
		{
			\fill[teal] (\x+.5,\y+.5) circle(8pt);
		}
		\foreach \x/\y in {0/-1,0/0,0/1, 0/2,0/3,0/4,0/5,0/6,
			1/-1,1/5,1/6,
			2/-1, 2/5,2/6,
			3/-1,3/6,
			4/-1,4/0,4/6, 
			5/-1,5/0,5/6,
			6/-1,6/0,6/6, 
			6/-1,6/0,6/1, 6/2,6/3,6/4,6/5,6/6}
		{
			\fill[pink] (\x+.5-.25,\y+.5-.25) rectangle (\x+.5+.25,\y+.5+.25);
		}
		\end{tikzpicture}\qquad 
		\begin{tikzpicture}[scale=\scfa]
		\draw[help lines] (-1,-1) grid (7,7);
		\pic[scale = \scfa,rotate=0] at (4,0){tetrominoL={-1}};
		\pic[scale=\scfa,rotate=0] at (1,0) {tetrominoI};
		\pic[scale = \scfa,rotate =180] at (5,6) {tetrominoN={1}};
		\pic[scale = \scfa,rotate=270] at (4,6){tetrominoT};
		\pic[scale=\scfa,rotate=0] at (4,1) {tetrominoO};
		\draw[red,thick] (2,4) circle(12pt) ;
		\end{tikzpicture}
		\caption{Left: A tetromino fence whose complement is clearly split into two polyominoes: the inside polyomino (marked by teal circles) and the outside polyomino (marked by pink squares). Right: In this case, the inside and outside polyominoes determined by the complement of the tetrominoes share a vertex, so this placement does not qualify as a fence under our definition.}
		\label{fig:def_fences}
	\end{figure}
	\section*{Warming-up challenges}

	\noindent\textbf{What is a polyform?} A \emph{polyform} is a subset of tiles on a tessellation whose interior is connected. For chess lovers, in the square grid case, this means that a polyform is a subset of tiles that are \emph{rookwise-connected}; that is, a rook can visit every tile of the polyform by moving along rows and columns. In the (Euclidean) plane, there are exactly three regular tessellations built from regular polygons: equilateral triangles, squares, and regular hexagons (Question to you: why only three?). We refer to polyforms based on these tessellations as \emph{polyiamonds}, \emph{polyominoes}, and \emph{polyhexes}, respectively. \\

	\noindent\textbf{What is a fence?} You will be asked to build a larger polyform, which we call a fence, by combining two or more polyforms to enclose an area on the grid. A fence must satisfy the following condition: the tiles not covered by the fence (its complement) must be split into two polyforms (components) that do not share any vertices. Figure~\ref{fig:def_fences} gives an example and a non-example of a fence.  We call \textit{fence configuration} a particular order in which the polyforms are located in a fence, reading clockwise. Observe that, in general, there are multiple fences that have the same fence configuration.	
	
	\begin{challenge} \label{challenge:tetrominoes}
		Take the tetrominoes, depicted in Figure~\ref{fig:tetrapieces}, and use all of them to build a fence that encloses the biggest possible area.
	\end{challenge}

\begin{figure}[h]
	\centering
	\begin{tikzpicture}[scale = \scfa,baseline={(current bounding box.center)}]
	\draw  (0,5) node[left] {\ScissorRight};
	\pic[scale=\scfa] at (1,1) {tetrominoI};
	\draw (1.5,1) node[below] {\polfont i};
	\pic[scale=\scfa] at (4,1) {tetrominoL};
	\draw (4.5,1) node[below] {\polfont l};
	\pic[scale=\scfa] at (7,1) {tetrominoN};
	\draw (8.5,1) node[below] {\polfont n};
	\pic[scale=\scfa] at (11,1) {tetrominoO};
	\draw (12.5,1) node[below] {\polfont o};
	\pic[scale=\scfa] at (14,1) {tetrominoT};
	\draw (15.5,1) node[below] {\polfont t};
	\end{tikzpicture}
	\caption{The five tetrominoes in alphabetical order.}		\label{fig:tetrapieces}
\end{figure}

\begin{challenge} \label{challenge:pentominoes}
		There are 12 pentominoes, labelled and depicted in Figure~\ref{pentominoes}. Complete Table~\ref{table:subsetpento} by enclosing the largest possible area using fences built from the indicated subset of pentominoes. We suggest starting with just three pentominoes to gradually build up your fence challenge muscles. Add one pentomino at a time to increase the complexity step by step.
\end{challenge}
	
	\begin{figure}[h]
		\centering
		\begin{tikzpicture}[scale=\scfa]
			\draw  (-1,8) node[left] {\ScissorRight};
			\pic[scale=\scfa] at (0,0) {pentominoF};
			\draw (1.5,0) node[below] {\polfont F};
			\pic[scale=\scfa] at (4,0) {pentominoI};
			\draw (4.5,0) node[below] {\polfont I};
			\pic[scale=\scfa] at (8,0) {pentominoL};
			\draw (8.5,0) node[below] {\polfont L};
			\pic[scale=\scfa] at (12,0) {pentominoN};
			\draw (12.5,0) node[below] {\polfont N};
			\pic[scale=\scfa] at (15,0) {pentominoP};
			\draw (15.5,0) node[below] {\polfont P};
			\pic[scale=\scfa] at (19,0) {pentominoT};
			\draw (20.5,0) node[below] {\polfont T};
			\pic[scale=\scfa] at (0,6) {pentominoU};
			\draw (1.5,10) node[below] {\polfont U};
			\pic[scale=\scfa] at (4,6) {pentominoV};
			\draw (5.5,10) node[below] {\polfont V};
			\pic[scale=\scfa] at (8,6) {pentominoW};
			\draw (9.5,10) node[below] {\polfont W};
			\pic[scale=\scfa] at (12,6) {pentominoX};
			\draw (13.5,10) node[below] {\polfont X};
			\pic[scale=\scfa] at (16,5) {pentominoY};
			\draw (16.5,10) node[below] {\polfont Y};
			\pic[scale=\scfa] at (19,5) {pentominoZ};
			\draw (20.5,10) node[below] {\polfont Z};
		\end{tikzpicture}    
		\caption{All the twelve pentominoes in alphabetical order.}
		\label{pentominoes}
	\end{figure}
	
	\begin{table}[h]
		\centering
		\begin{tabular}{l|c}
			Pentominoes used & Maximum area\\
			\texttt{F I L} & 4  \\
			\texttt{F I L N} & -- 
			\\
			\texttt{F I L N P} & 19 \\
			\texttt{F I L N P T} & --
			\\
			\texttt{F I L N P T U} & --  
			\\
			\texttt{F I L N P T U V} &--
			\\
			\texttt{F I L N P T U V W} &-- 
			\\
			\texttt{F I L N P T U V W X} &-- 
			\\
			\texttt{F I L N P T U V W X Y} & -- 
			\\
			\texttt{F I L N P T U V W X Y Z} & 128\\
		\end{tabular}
		\caption{Each letter represents one of the 12 pentominoes. 
		}\label{table:subsetpento}
	\end{table}

	\begin{challenge} \label{challenge:polyhexes}
		Polyhexes are polyforms that live in the regular hexagonal tessellation. Take the seven tetrahexes and build, with all of them, a fence that encloses the biggest possible area.
	\end{challenge}

	\begin{figure}[h!]
		\begin{tikzpicture}[hexa/.style= {shape=regular polygon,
				regular polygon sides=6,
				minimum size=\hexscfa cm, draw,
				inner sep=0,anchor=south,
			},scale=\hexscfa]
			\draw  (-1,3) node[left] {\ScissorRight};
			\foreach \j in {0,...,3}{%
				\node[hexa,fill=\couleurhexa] (h0;\j) at ({0},{(\j+1/2)*sin(60)}) {};
			}
			\node[] (h0,-1) at (0,0) {\polfont{bar}};
			\node[hexa,fill=\couleurhexb] (h2;0) at ({2+0},{(0+1/2)*sin(60)}) {};
			\node[hexa,fill=\couleurhexb] (h2;1) at ({2+0},{(1+1/2)*sin(60)}) {};
			\node[hexa,fill=\couleurhexb] (h2;2) at ({2+0},{(2+1/2)*sin(60)}) {};
			\node[hexa,fill=\couleurhexb] (h3;2) at ({2+3/4},{2*sin(60)}) {};
			\node[] (h2,-1) at (2,0) {\polfont{pistol}};
			\node[hexa,fill=\couleurhexc] (h4;0) at ({4+0},{(0+1/2)*sin(60)}) {};
			\node[hexa,fill=\couleurhexc] (h4;1) at ({4+0},{(1+1/2)*sin(60)}) {};
			\node[hexa,fill=\couleurhexc] (h4;2) at ({4+0},{(2+1/2)*sin(60)}) {};
			\node[hexa,fill=\couleurhexc] (h5;3) at ({4+3/4},{3*sin(60)}) {};
			\node[] (h2,-1) at (4,0) {\polfont{worm}};
			\node[hexa,fill=\couleurhexd] (h6;0) at ({6+3/4},{(2)*sin(60)}) {};
			\node[hexa,fill=\couleurhexd] (h6;1) at ({6+0},{(0+1/2)*sin(60)}) {};
			\node[hexa,fill=\couleurhexd] (h6;2) at ({6+0},{(1+1/2)*sin(60)}) {};
			\node[hexa,fill=\couleurhexd] (h6;3) at ({6+3/4},{3*sin(60)}) {};
			\node[] (h2,-1) at (6,0) {\polfont{wave}};
			\node[hexa,fill=\couleurhexe] (h8;0) at ({8+3/4},{1*sin(60)}) {};
			\node[hexa,fill=\couleurhexe] (h8;1) at ({8+0},{(0+1/2)*sin(60)}) {};
			\node[hexa,fill=\couleurhexe] (h8;2) at ({8+0},{(2+1/2)*sin(60)}) {};
			\node[hexa,fill=\couleurhexe] (h8;3) at ({8+3/4},{2*sin(60)}) {};
			\node[] (h2,-1) at (8,0) {\polfont{arc}};
			\node[hexa,fill=\couleurhexf] (h0;0) at ({10+3/4},{1*sin(60)}) {};
			\node[hexa,fill=\couleurhexf] (h0;1) at ({10+0},{(0+1/2)*sin(60)}) {};
			\node[hexa,fill=\couleurhexf] (h0;2) at ({10+0},{(1+1/2)*sin(60)}) {};
			\node[hexa,fill=\couleurhexf] (h6;3) at ({10+3/4},{2*sin(60)}) {};
			\node[] (h2,-1) at (10,0) {\polfont{bee}};
			\node[hexa,fill=\couleurhexg] (h0;0) at ({13-3/4},{2*sin(60)}) {};
			\node[hexa,fill=\couleurhexg] (h0;1) at ({13+0},{(0+1/2)*sin(60)}) {};
			\node[hexa,fill=\couleurhexg] (h0;2) at ({13+0},{(1+1/2)*sin(60)}) {};
			\node[hexa,fill=\couleurhexg] (h6;3) at ({13+3/4},{2*sin(60)}) {};
			\node[] (h2,-1) at (13,0) {\polfont{propeller}};
		\end{tikzpicture}
		\caption{The seven tetrahexes and their common names.}\label{fig:tetrahexes}
	\end{figure}

	\begin{challenge} \label{challenge:polyiamonds}
		Polyiamonds are polyforms that live in the regular triangular tessellation. As there are only 3 tetriamonds and 4 pentiamonds (Mini-challenge: find them all just for fun) we start the challenges in the triangular grid with the 12 hexiamonds. Build with all of them a fence that encloses the biggest possible area. 
	\end{challenge}
	
	\begin{challenge}[Proposed by Matthew Kahle, 2024]
		For this challenge, we relax the strict conditions we require of a fence and instead focus on a beautiful extremal topological question: given a polyomino $P$ built from 60 square tiles using the 12 pentominoes, what is the maximum number of \emph{holes}, defined as the number of finite connected components in the complement of $P$, that can be created?
		
		This problem has been completely solved for Euclidean tessellations when it is allowed to use a given number of copies of single polygons (monotile polyforms)~\cite{KR19, MaR, malen2021extremalI, malen2021extremalII}. From these results, upper bounds on the number of holes can be readily established. For instance, a polyomino composed of 60 square tiles can have at most 21 holes~\cite{KR19}; therefore, our polyomino $P$, built from the 12 pentominoes, cannot have more holes than this maximum.
		
		A natural way to generalize the challenge is to consider, for example, 30 copies of the domino, or other polyominoes that all together have 60 squares. Generalizations of these questions can be explored for triangular or hexagonal tessellations, and far beyond into the regular cubical tessellation or the hyperbolic world.
	\end{challenge}
	
		\begin{figure}[h]
		\begin{tikzpicture}[scale=\triscfa]
		\draw  (-2,5) node[left] {\ScissorRight};
		\pic[scale=\triscfa,rotate=60] at (0,2) {hexiaRhomboid};
		\node[] at (0,0) {\polfont{rhomboid}};
		\pic[scale=\triscfa,rotate=60] at (4,2) {hexiaCrook};
		\node[] at (4,0) {\polfont{crook}};
		\pic[scale=\triscfa,rotate=60] at (8,2) {hexiaCrown};
		\node[] at (8,0) {\polfont{crown}};
		\pic[scale=\triscfa,rotate=60] at (12,2) {hexiaSphinx};
		\node[] at (12,0) {\polfont{sphinx}};
		\pic[scale=\triscfa,rotate=60] at (16,2) {hexiaSnake};
		\node[] at (16,0) {\polfont{snake}};
		\pic[scale=\triscfa,rotate=60] at (20,2) {hexiaYacht};
		\node[] at (20,0) {\polfont{yacht}};
		\pic[scale=\triscfa,rotate=60] at (0,-4) {hexiaChevron};
		\node[] at (0,-6) {\polfont{chevron}};
		\pic[scale=\triscfa,rotate=60] at (4,-4) {hexiaSignpost};
		\node[] at (4,-6) {\polfont{signpost}};
		\pic[scale=\triscfa,rotate=60] at (8,-4) {hexiaLobster};
		\node[] at (8,-6) {\polfont{lobster}};
		\pic[scale=\triscfa,rotate=60] at (12,-4) {hexiaShoe};       
		\node[] at (12,-6) {\polfont{shoe}};
		\pic[scale=\triscfa,rotate=60] at (15.5,-4) {hexiaHexagon};
		\node[] at (15.5,-6) {\polfont{hexagon}};
		\pic[scale=\triscfa,rotate=60] at (20.5,-4) {hexiaButterfly};
		\node[] at (20,-6) {\polfont{butterfly}};
		\end{tikzpicture}
		\caption{The 12 hexiamonds and their common names.}\label{fig:hexiamonds}
	\end{figure}
	\begin{remark}
		We encourage the reader to spend some time exploring these challenges and reflecting on them before continuing with the rest of the paper. How would you model these problems mathematically in order to find and prove solutions? Could you design an algorithm to program a computer to solve them? In the remainder of the paper, we return to a more conventional academic style and present some solutions to the initial challenges. But fear not, we also introduce new challenges that might require a bit more mathematical maturity to solve them, guiding you through the mathematics and computer science underlying these problems.

	\end{remark}

	\section{Intro: solutions to Challenges~\ref{challenge:tetrominoes} to~\ref{challenge:polyiamonds} and more challenges}\label{sec:introduction}
	\subsection{Solution to Challenge~\ref{challenge:tetrominoes}} 
	Below we give an example of a fence that encloses an area of 9, which we claim is the maximum area that can be enclosed by a fence using the 5 tetrominoes. 
	One important property of a fence that captures some of its structure is the order in which the polyforms are located along the fence (observe that they form a cycle). Using the notation established in Figure~\ref{fig:tetrapieces}, for instance, the order of the tetrominoes along the fence in Figure~\ref{fig:sol_tetra} can be encoded as \polfont{intol}. Recall that we call \textit{fence configuration} a particular order in which the polyforms are located in a fence. Observe that, in general, there are multiple fences that have the same fence configuration. A natural challenge arises:
	
	\begin{challenge}
		How many tetromino fence configurations can enclose an area of 9? Note that solutions must really differ in the order of the tetrominoes to be counted as different solutions.
	\end{challenge}

	\begin{figure}[h!]
		\centering
		\begin{tikzpicture}[scale=\scfa]
			\draw[help lines] (0,0) grid (8,8);
			\pic[scale = \scfa,rotate=270] at (1,1){tetrominoL={-1}};
			\pic[scale=\scfa,rotate=0] at (1,3) {tetrominoI};
			\pic[scale = \scfa,rotate =0] at (5,5) {tetrominoN={-1}};
			\pic[scale = \scfa,rotate=90] at (7,3){tetrominoT};
			\pic[scale=\scfa,rotate=0] at (4,1) {tetrominoO};
		\end{tikzpicture}
		\caption{One extremal fence of configuration \polfont{intol}  offering a solution to the tetromino fence challenge.}
		\label{fig:sol_tetra}
	\end{figure}

	\subsection{Some history and solution to Challenge~\ref{challenge:pentominoes}}
	
	A set of fence-like puzzles with pentominoes was first presented by Feser in 1968~\cite{Fe68}. One of them,	asking to find the greatest enclosed area by a fence built with the 12 pentominoes, gained popularity when it appeared in 1973 in the famous Gardner’s ``Mathematical Games'' column~\cite{Ga73}. In this column, Gardner presented his best attempt, of area 127 squares, and he challenged the readers to find a better solution. At that time, the maximum area of 128 had already been found by Knuth, who communicated it directly to Gardner. He also had a sketch of proof that it was indeed the maximum solution, but the first complete proof that 128 is, indeed, the maximum was first published by Takakazu Shimauchi in 1978~\cite{Ta78}. 
	
	\begin{figure}[h!]
		\centering
		\begin{tikzpicture}[scale=\scfa]
			\draw[help lines] (0,0) grid (20,20);
			\pic[scale = \scfa,rotate=0] at (2,3){pentominoV};
			\pic[scale=\scfa,rotate=0] at (2,6) {pentominoI};
			\pic[scale = \scfa,rotate =0] at (1,11) {pentominoF};
			\pic[scale = \scfa,rotate=90] at (6,14){pentominoT};   \pic[scale = \scfa,rotate=270] at (6,15){pentominoY={-1}};
			\pic[scale = \scfa,rotate=270] at (10,16) {pentominoP};
			\pic[scale = \scfa,rotate=90] at (16,12) {pentominoW};
			\pic[scale = \scfa,rotate=0] at (14,9) {pentominoX};
			\pic[scale = \scfa,rotate=0] at (16,5) {pentominoL={-1}};
			\pic[scale = \scfa,rotate=270] at (12,2) {pentominoZ={-1}};
			\pic[scale = \scfa,rotate=0] at (9,1) {pentominoU};
			\pic[scale = \scfa,rotate=90] at (9,2) {pentominoN};
		\end{tikzpicture}
		\caption{An extremal pentomino fence enclosing 128 tiles. Its fence configuration is $\polfont{FTYPWXLZUNVI}$.}
		\label{fig:sol_max_pento}
	\end{figure}

	We have confirmed this result by Shimauchi by an Integer Linear Programming (ILP) implementation and also by going through all the details of his proof \footnote{Thanks to Tamara Sprinkle, we now have an English translation of Shimauchi's paper, which will make it available to a wider community, and we can share it on request.}. The reader can find a version of Shimauchi's proof in Section~\ref{app:proofShimauchi}. We have, furthermore, checked that there are only 1440 fence configurations that can enclose a maximum area of 128 (also mentioned in Gardner's column). 
	
	Other than Shimauchi's publication, we were not able to find any other source proving that 128 is the global maximum for the pentomino fence challenge or any other results related to the challenges that we present in this paper.

	Even though Shimauchi's proof can be adapted to other polyforms in higher dimensions or in non-Euclidean geometries,  without computer assistance, the number of cases to check quickly becomes  unbearable, and even with computer assistance, finding the fence configurations that enclose the global maximum is a hard problem due to the huge number of fence configurations (and the exponential growth of polyforms~\cite{Eden61,klarner1965some}).

	\subsection{Main results}
	We formulated the fence challenges as an Integer Linear Programming (ILP) instance and employed state-of-the-art solvers such as Gurobi~\cite{HH18}. This approach provided a proof of the (global) maximum enclosed area for several subsets of polyforms on the three regular Euclidean  tessellations. Our ILP implementation also revealed something more: it allowed us to compute the number of fence configurations whose maximum enclosed area corresponds to a given value, for the first few larger area sizes. Our main results are synthesised in Theorem~\ref{thm:tab:number_of_solutions}.
	
	\begin{theorem}\label{thm:tab:number_of_solutions}
		The number of fence configurations attaining a given area as their maximum for the three tessellations of the plane are given in the following table. The extremal fence configurations have an area of 128 tiles for the pentominoes, 35 for the tetrahexes and 116 for the hexiamonds.
		\begin{table}[H]
			\begingroup
			\centering
			\begin{tabular}{c|c||c|c||c|c}
				\multicolumn{6}{c}{Extremal fences with polyforms} \\\hline
				\multicolumn{2}{c}{Pentominoes}& \multicolumn{2}{c}{Tetrahexes} & \multicolumn{2}{c}{Hexiamonds}\\\hline
				Area & No. of fence config.  &  Area & No. of fence config. &  Area & No. of fence config. \\\hline\hline
				\textbf{128} & 1440 & \textbf{35} & 2 & \textbf{116} & 546 \\\hline
				127 & 19416 & 34 & 21 & 115 & 2360\\
				126 & 84228 & 33& 89 & & \\
				&& 32 & 101 & & \\
				& & 31 & 115 &&
			\end{tabular}
			\caption{Number of fence configurations having a given area as their maximum. The global maxima for each tessellation are in bold on the first row. ( This table will be updated with all values in the next version.)}
			\endgroup
		\end{table}
	\end{theorem}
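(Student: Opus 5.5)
The theorem is computational, so the proof will be a certified exhaustive search organised as an integer linear program. We first reduce each instance to a finite problem. A fence built from $n$ tiles of total perimeter $p$ must have its inner polyform enclosed by a closed lattice curve of bounded length. Hence the enclosed region, and therefore the whole fence, fits inside a bounded window of the tessellation. For pentominoes, a $20\times 20$ board (as in Figure~\ref{fig:sol_max_pento}) or slightly larger suffices. We justify the window size by an isoperimetric bound: the inner boundary of the fence has length at most the sum of contributions of the pieces. On this window we fix the set of all placements (translations, rotations, reflections) of each piece.

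We introduce the following variables:
\begin{itemize}
\item a binary $x_{P,\pi}$ for each piece $P$ and placement $\pi$, with $\sum_\pi x_{P,\pi}=1$;
\item binary cell variables $f_c$ (fence), $i_c$ (inside) and $o_c$ (outside), with $f_c+i_c+o_c=1$ and $f_c=\sum_{\pi\ni c}x_{P,\pi}$.
\end{itemize}
The fence condition becomes linear constraints. First, any two cells sharing a vertex cannot be one inside and one outside, so $i_c+o_{c'}\le 1$ for all vertex-neighbours $c,c'$. Second, the border cells of the window are outside. Third, connectedness of the inside (and outside) is imposed by a single-commodity flow from a designated root cell, with symmetry broken by fixing the root position. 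We then maximise $\sum_c i_c$. Connectedness of the fence itself follows automatically from the vertex-separation of two components. Gurobi's optimality certificate (a matching dual bound) then proves the maxima 128, 35 and 116. As an independent check for pentominoes, we cross-check against Shimauchi's hand proof, presented in Section~\ref{app:proofShimauchi}.

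For the counts, a fence configuration is a cyclic order of the pieces, taken up to rotation and reflection of the cycle according to the paper's convention. We add variables encoding adjacency of consecutive pieces along the fence: piece $P$ touches piece $Q$ along the inner boundary. We then iterate. For a target area $A$, we solve with the objective fixed to $\ge A$, record the induced cyclic order, and add a no-good cut excluding that order. We repeat until the model is infeasible. The configurations found at level $A$ but not at level $A+1$ are exactly those whose maximum is $A$. Alternatively, we enumerate cyclic orders and solve, for each, the restricted maximisation; with $11!/2$ orders for pentominoes this is feasible only with heavy pruning by the global bound.

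The main obstacle is the counting step. A configuration's maximum must be certified exactly, which requires an optimality proof for each order, not just one feasible fence per order. The adjacency encoding must also read off the cyclic order unambiguously even when pieces touch non-consecutively. I would first try solution-pool enumeration with order-level no-good cuts, and verify a sample of the counts, such as the two tetrahex configurations of area 35, by direct per-order optimisation.
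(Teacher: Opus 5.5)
Your plan follows the paper's route: a bounded board justified by Shimauchi's length count (Corollary~\ref{coro:sizeboardpentomino}), a placement ILP whose interior, border and overlap constraints are equivalent to those of~\eqref{eq:ILP}, Gurobi's dual bound for the upper bound, and explicit fences for the lower bound. You add two things.

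\begin{itemize}
\item The paper's model has no connectivity constraints at all, so it is a relaxation that only certifies upper bounds; the paper appeals to the heuristic that optimal interiors are connected. Your flow constraints make the model exact. That is what the table really needs, since ``maximum exactly $A$'' is a statement about exact per-configuration maxima.
\item The paper describes no counting procedure. Your level-by-level enumeration is logically sound: the configurations feasible at level $A$ but not at $A+1$ are exactly those with maximum $A$, provided every cut removes exactly one configuration.
\end{itemize}

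Your choice to identify a cycle with its reverse is the convention consistent with the table. The mirror image of a fence is a fence whose clockwise reading is reversed, so counting both would make every entry even. Yet the tetrahex column has odd entries.

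Several steps need repair.

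\begin{itemize}
\item \textbf{Fence connectivity.} Your claim that connectedness of the fence follows from the vertex separation is false. A piece placed in the middle of the outside region, or inside without disconnecting the interior, satisfies every constraint you list. This is harmless for the maxima, since it only enlarges the feasible set. But such a piece has no place in the cyclic order. For the counts you therefore need either constraints forcing all pieces into the ring, or a proof (e.g.\ via the length count) that a ring omitting a piece cannot reach the tabulated areas.
\item \textbf{Fixed root.} Fixing the position of the interior root is not a free symmetry reduction on a board that is only just large enough. It discards fences that fit only in translates where that cell is not interior. Either enlarge the board or select the root with variables.
\item \textbf{Board size.} The board must contain every fence of area at least $126$, $31$ and $115$ respectively, not only the extremal ones. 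Corollary~\ref{coro:sizeboardpentomino} addresses only the extremal fences.
\item \textbf{Order encoding (most important).} You flag the order encoding but leave it open, and it is the crux of the counts. ``$P$ touches $Q$'' is not ``$P$ and $Q$ are consecutive'': a piece can touch a non-neighbour, e.g.\ by reaching around a short piece or lying along the outside of another. Let $E(C)$ be the set of consecutive pairs of $C$. With touching variables, the cut $\sum_{\{P,Q\}\in E(C)}a_{PQ}\le n-1$ also deletes fences of a different order $C'$ in which all pairs of $E(C)$ happen to touch. That corrupts the counts.
\end{itemize}

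What the last point requires is variables with $a_{PQ}=1$ exactly on the consecutive pairs of the fence's own order. The cut is then exact, since two distinct $n$-cycles on the same $n$ pieces cannot share all $n$ edges. You also need a proof that your linear definition really reads off that order, for instance one based on edge-adjacent cells of $P$ and $Q$ bordering a common interior cell. Otherwise each order's maximum must be certified separately.
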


	We conclude this section by presenting some extremal fences that solve Challenges~\ref{challenge:polyhexes} and~\ref{challenge:polyiamonds}. In Section~\ref{app:proofShimauchi}, we provide a version of Shimauchi's proof, incorporating improvements and clarifications. Then, in Section~\ref{sec:ilp} we explain the mathematical modelling and algorithm behind our ILP implementation.

	\subsection*{Solution to Challenge~\ref{challenge:polyhexes}}
	There are 7 tetrahexes and in this case we give the only two fence configurations that enclose the maximum area possible amongst all fences with 7 tetrahexes; see Figure~\ref{fig:sols_tetrahex}.
	
	\begin{figure}[h!]
		\centering
		\begin{tikzpicture}[hexa/.style= {shape=regular polygon,
				regular polygon sides=6,
				minimum size=\hexscfa cm, draw,
				inner sep=0,anchor=south,
			},scale=\hexscfa,rotate=90,transform shape]
			
			\foreach \j in {0,...,10}{%
				\ifodd\j 
				\foreach \i in {0,...,6}{\node[hexa] (h\j;\i) at ({\j/2+\j/4},{(\i+1/2)*sin(60)}) 
					{}        
					;}        
				\else
				\foreach \i in {0,...,7}{\node[hexa] (h\j;\i) at ({\j/2+\j/4},{\i*sin(60)}) 
					{} 
					;}
				\fi}         
			\foreach \j/\i in {3/0,1/1}{\node[hexa,fill=\couleurhexa] (h\j;\i) at ({\j/2+\j/4},{(\i+1/2)*sin(60)}) 
				{}
				;}
			\foreach \j/\i in {4/0,2/1}{\node[hexa,fill=\couleurhexa] (h\j;\i) at ({\j/2+\j/4},{\i*sin(60)}) 
				{}
				;}
			\foreach \j/\i in {1/5,3/6}{\node[hexa,fill=\couleurhexb] (h\j;\i) at ({\j/2+\j/4},{(\i+1/2)*sin(60)}) 
				{}
				;}
			\foreach \j/\i in {2/7,2/6}{\node[hexa,fill=\couleurhexb] (h\j;\i) at ({\j/2+\j/4},{\i*sin(60)}) {};}
			\foreach \j/\i in {9/5,9/4,9/3}{\node[hexa,fill=\couleurhexc] (h\j;\i) at ({\j/2+\j/4},{(\i+1/2)*sin(60)}) 
				{};}
			\foreach \j/\i in {8/6}{\node[hexa,fill=\couleurhexc] (h\j;\i) at ({\j/2+\j/4},{\i*sin(60)}) {};}
			\foreach \j/\i in {5/6,7/6}{\node[hexa,fill=\couleurhexd] (h\j;\i) at ({\j/2+\j/4},{(\i+1/2)*sin(60)}) 
				{};}
			\foreach \j/\i in {6/7,4/7}{\node[hexa,fill=\couleurhexd] (h\j;\i) at ({\j/2+\j/4},{\i*sin(60)}) {};}
			\foreach \j/\i in {1/4,1/2}{\node[hexa,fill=\couleurhexe] (h\j;\i) at ({\j/2+\j/4},{(\i+1/2)*sin(60)}) 
				{};}
			\foreach \j/\i in {0/4,0/3}{\node[hexa,fill=\couleurhexe] (h\j;\i) at ({\j/2+\j/4},{\i*sin(60)}) {};}
			\foreach \j/\i in {5/0,7/0}{\node[hexa,fill=\couleurhexf] (h\j;\i) at ({\j/2+\j/4},{(\i+1/2)*sin(60)}) 
				{};}
			\foreach \j/\i in {6/1,6/0}{\node[hexa,fill=\couleurhexf] (h\j;\i) at ({\j/2+\j/4},{\i*sin(60)}) {};}
			\foreach \j/\i in {9/1,9/2}{\node[hexa,fill=\couleurhexg] (h\j;\i) at ({\j/2+\j/4},{(\i+1/2)*sin(60)}) 
				{};}
			\foreach \j/\i in {8/1,10/1}{\node[hexa,fill=\couleurhexg] (h\j;\i) at ({\j/2+\j/4},{\i*sin(60)}) {};}
		\end{tikzpicture}
		\quad
		\begin{tikzpicture}[hexa/.style= {shape=regular polygon,
				regular polygon sides=6,
				minimum size=\hexscfa cm, draw,
				inner sep=0,anchor=south,
			},scale=\hexscfa]
			
			\foreach \j in {0,...,9}{%
				\ifodd\j 
				\foreach \i in {0,...,8}{\node[hexa] (h\j;\i) at ({\j/2+\j/4},{(\i+1/2)*sin(60)}) 
					{}        
					;}        
				\else
				\foreach \i in {0,...,9}{\node[hexa] (h\j;\i) at ({\j/2+\j/4},{\i*sin(60)}) 
					{} 
					;}
				\fi}         
			\foreach \j/\i in {1/6,3/7}{\node[hexa,fill=\couleurhexa] (h\j;\i) at ({\j/2+\j/4},{(\i+1/2)*sin(60)}) 
				{}
				;}
			\foreach \j/\i in {4/8,2/7}{\node[hexa,fill=\couleurhexa] (h\j;\i) at ({\j/2+\j/4},{\i*sin(60)}) 
				{}
				;}
			\foreach \j/\i in {1/5,1/4,1/3}{\node[hexa,fill=\couleurhexb] (h\j;\i) at ({\j/2+\j/4},{(\i+1/2)*sin(60)}) 
				{}
				;}
			\foreach \j/\i in {0/5}{\node[hexa,fill=\couleurhexb] (h\j;\i) at ({\j/2+\j/4},{\i*sin(60)}) {};}
			\foreach \j/\i in {9/5,9/4,9/6}{\node[hexa,fill=\couleurhexc] (h\j;\i) at ({\j/2+\j/4},{(\i+1/2)*sin(60)}) 
				{};}
			\foreach \j/\i in {8/7}{\node[hexa,fill=\couleurhexc] (h\j;\i) at ({\j/2+\j/4},{\i*sin(60)}) {};}
			\foreach \j/\i in {9/3,7/1}{\node[hexa,fill=\couleurhexd] (h\j;\i) at ({\j/2+\j/4},{(\i+1/2)*sin(60)}) 
				{};}
			\foreach \j/\i in {8/2,8/3}{\node[hexa,fill=\couleurhexd] (h\j;\i) at ({\j/2+\j/4},{\i*sin(60)}) {};}
			\foreach \j/\i in {5/8,7/8,7/7}{\node[hexa,fill=\couleurhexe] (h\j;\i) at ({\j/2+\j/4},{(\i+1/2)*sin(60)}) 
				{};}
			\foreach \j/\i in {6/9}{\node[hexa,fill=\couleurhexe] (h\j;\i) at ({\j/2+\j/4},{\i*sin(60)}) {};}
			\foreach \j/\i in {5/0,5/1}{\node[hexa,fill=\couleurhexf] (h\j;\i) at ({\j/2+\j/4},{(\i+1/2)*sin(60)}) 
				{};}
			\foreach \j/\i in {6/1,4/1}{\node[hexa,fill=\couleurhexf] (h\j;\i) at ({\j/2+\j/4},{\i*sin(60)}) {};}
			\foreach \j/\i in {3/1,1/1}{\node[hexa,fill=\couleurhexg] (h\j;\i) at ({\j/2+\j/4},{(\i+1/2)*sin(60)}) 
				{};}
			\foreach \j/\i in {2/2,2/3}{\node[hexa,fill=\couleurhexg] (h\j;\i) at ({\j/2+\j/4},{\i*sin(60)}) {};}
		\end{tikzpicture}
		\caption{Two tetrahex fences realising the two extremal fence configurations for the tetrahex fence challenge. The (global) maximum area that they enclose consists of 35 hexagons.}
		\label{fig:sols_tetrahex}
	\end{figure}

	\subsection*{Solution to Challenge~\ref{challenge:polyiamonds}}
	
	As a first step, we focus on pentiamonds. Since there are only four pentiamonds, this can be achieved in a short amount of time, and it allows us to provide some geometric insights before presenting the solution for the hexiamonds. An extremal fence built using the 4 pentiamonds is shown in Figure~\ref{fig:sol_pentiamonds}.
	\begin{figure}[h]
		\begin{tikzpicture}[scale=\triscfa]
			\newcommand{\nbx}{7}
			\newcommand{\nby}{7}
			
			
			\clip (tri cs: x=2,y=0) -- (tri cs:x=5,y=0) -- (tri cs:x=5,y=3) -- (tri cs: x=2,y=6) -- (tri cs: x=-1,y=6) -- (tri cs: x=-1,y=3) -- cycle; 
			\foreach \j in {-1,...,\the\numexpr\nby-1}
			{\foreach \i in {-1,...,\the\numexpr\nbx-1} 
				{
			}}
			\foreach \j in {-1,...,\the\numexpr\nby-1}
			{\foreach \i in {-1,...,\the\numexpr\nbx-1} 
				{
					\draw (tri cs: x=\i,y=\j) -- (tri cs:x=\i,y=\nby-1);
					\draw (tri cs: x=\i,y=\j) -- (tri cs:x=\nbx-1,y=\j);
					\draw (tri cs: x=\i,y=\j) -- (tri cs:x=\j,y=\i);
			}    }
			\pic[scale=\triscfa,rotate=0] at (tri cs:x=4,y=1) {pentiaBar={-1}};;
			\pic[scale=\triscfa,rotate=120] at (tri cs:x=2,y=5) {pentiaSphinx={1}};
			\pic[scale=\triscfa,rotate=300] at (tri cs:x=2,y=3) {pentiaHexa};
			\pic[scale=\triscfa,rotate=240] at (tri cs:x=0,y=3) {pentiaCrook={-1}}; 
		\end{tikzpicture}
		\caption{One of the extremal pentiamond fences attaining an area of 5.}\label{fig:sol_pentiamonds}
	\end{figure}
	
	Observing the \textit{shape} of this extremal fence, we would like to offer a hint for the hexiamond fence challenge, in case you have not yet found an extremal configuration.
	
	\begin{hint}
		The extremal fences built with hexiamonds tend to resemble a hexagon. This means there will be six ``corner'' pieces. The key idea is to place the \textit{best} hexiamonds in those corners. For example, \polfont{chevron} could be an excellent corner piece, as are \polfont{crook}, \polfont{snake}, and \polfont{signpost}. Why?
	\end{hint}
	
	These observations are based in the \emph{isoperimetric properties of polyforms}, which vary depending on the tessellation, and have been shown to play a key role in extremal topological problems~\cite{harary1976extremal, malen2021extremalI, malen2021extremalII, beautifulanimals, beautifulanimals2}. We will not hold the suspense much longer: an extremal fence configuration for the hexiamonds is presented in Figure~\ref{fig:sols_hexiamonds}.
	
	
	\begin{figure}[H]
		\begin{tikzpicture}[scale=\triscfa]
			\newcommand{\nbx}{14}
			\newcommand{\nby}{14}
			
			\clip (tri cs: x=6,y=0) -- (tri cs:x=13,y=0) -- (tri cs:x=13,y=7) -- (tri cs: x=7,y=13) -- (tri cs: x=0,y=13) -- (tri cs: x=0,y=6) -- cycle; 
			
			\foreach \j in {0,...,\the\numexpr\nby-1}
			{\foreach \i in {0,...,\the\numexpr\nbx-1} 
				{
			}}
			\foreach \j in {0,...,\the\numexpr\nby-1}
			{\foreach \i in {0,...,\the\numexpr\nbx-1} 
				{
					\draw (tri cs: x=\i,y=\j) -- (tri cs:x=\i,y=\nby-1);
					\draw (tri cs: x=\i,y=\j) -- (tri cs:x=\nbx-1,y=\j);
					\draw (tri cs: x=\i,y=\j) -- (tri cs:x=\j,y=\i);
			}    }
			\pic[scale=\triscfa,rotate=0] at (tri cs:x=3,y=11) {hexiaSignpost={-1}};
			\pic[scale=\triscfa,rotate=120] at (tri cs:x=10,y=10) {hexiaButterfly};
			\pic[scale=\triscfa,rotate=300] at (tri cs:x=6,y=11) {hexiaChevron};
			\pic[scale=\triscfa,rotate=180] at (tri cs:x=3,y=12) {hexiaSphinx={-1}};
			\pic[scale=\triscfa,rotate=0] at (tri cs:x=2,y=8) {hexiaCrown};
			\pic[scale=\triscfa,rotate=60] at (tri cs:x=3,y=6) {hexiaYacht};
			\pic[scale=\triscfa,rotate=300] at (tri cs:x=3,y=4) {hexiaLobster};
			\pic[scale=\triscfa,rotate=300] at (tri cs:x=7,y=1) {hexiaCrook={-1}};
			\pic[scale=\triscfa,rotate=300] at (tri cs:x=8,y=1) {hexiaHexagon};
			\pic[scale=\triscfa,rotate=180] at (tri cs:x=10,y=3) {hexiaSnake={1}};\pic[scale=\triscfa,rotate=60] at (tri cs:x=12,y=5) {hexiaRhomboid={-1}};
			\pic[scale=\triscfa,rotate=300] at (tri cs:x=12,y=6) {hexiaShoe={-1}};  
		\end{tikzpicture}
		\caption{An extremal hexiamond fence enclosing an area of 116 triangles.}
		\label{fig:sols_hexiamonds}
	\end{figure}
	
	\section{Proof of the pentomino fence à la Shimauchi}\label{app:proofShimauchi}
	
	We now prove that the maximum area that can be enclosed by a pentomino fence configuration is at most 128 tiles. We already know that this maximum is indeed attained by certain fence configurations, for instance, the one depicted in Figure~\ref{fig:sol_max_pento}. As mentioned earlier, we identified all extremal fence configurations using the ILP approach described in Section~\ref{sec:ilp}.
	
	The proof we provide here is an English translation, with some improvements, of one originally published by Shimauchi~\cite{Ta78} in 1978. To our knowledge, this is the first English version of this proof.
	
	Readers may first wish to consult the proof we presented for the tetromino challenge in~\cite{LRRR24}. That proof can be used to introduce mathematical modelling techniques to students, who can then apply the same ideas to tackle the pentomino fence challenge.

	\begin{theorem}\label{thm:maxpento}
		The extremal fence configurations that solve the pentomino fence challenge enclose an area of 128 tiles.
	\end{theorem}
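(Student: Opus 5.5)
We prove the upper bound of 128; Figure~\ref{fig:sol_max_pento} already shows that 128 is attained. The plan is a perimeter-counting argument in the spirit of the tetromino proof of~\cite{LRRR24}.

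\textbf{Reduction to a lattice polygon.} We replace the enclosed region $R$ by its boundary. Since $R$ is a polyomino that shares no vertex with the outside component, the cells of the fence adjacent to $R$ (by edge or by vertex) form a closed ring around $R$. Set
\begin{itemize}
\item $p$ = the number of unit edges on the boundary of $R$;
\item $c$ = the number of convex corners of $R$;
\item $r$ = the number of reflex corners of $R$, so that $c-r=4$.
\end{itemize}
The ring of fence cells bordering $R$ uses at least $p + c - r = p+4$ cells counted with the right multiplicity: each boundary edge needs a fence cell across it, and each convex corner needs an extra diagonal cell so that the complement components do not meet at a vertex.

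\textbf{Per-pentomino efficiency.} Next we bound, for each of the 12 pentominoes, how many ``units of boundary'' it can contribute. We would show that the total is at most 60 units, minus a deficit that depends on the shape. For example:
\begin{itemize}
\item \polfont{I} lying straight contributes 5 edges;
\item \polfont{X}, \polfont{F}, \polfont{T}, \polfont{Y} and \polfont{P} necessarily waste cells that stick out or fold back, and so lose at least 1 each;
\item \polfont{V} placed at a convex corner contributes efficiently.
\end{itemize}
Adding up the minimal deficits gives an inequality of the form $p + (\text{corner cost}) \le 60 - \delta$, for an explicit constant $\delta$.

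\textbf{Isoperimetric step.} We then maximise the area of a polyomino with a given perimeter and a given corner structure. The enclosed area is at most the area of the bounding rectangle minus the cut-off corner triangles, so it is essentially at most $a b$ minus the staircase losses, where $2(a+b)$ is the perimeter, roughly the available budget of about 48. Finally we optimise over the number of convex corners. Each extra diagonal corner of the ``octagon-like'' shape costs one cell but can save area losses, and the extremal shape is a rectangle of about $12\times 11$ with a few corners chamfered. The arithmetic should give exactly $128$.

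\textbf{Main obstacle.} The hard step is the per-piece accounting. Pieces can sit in corners, can be turned so that they contribute both to an edge and to a diagonal, and can touch $R$ only at protrusions. A uniform bound per piece is too weak, so we expect a case analysis on which pieces occupy the four (or more) corner positions. We would first try to prove that at most a bounded number of pieces can be ``lossless'', namely \polfont{I}, \polfont{L}, \polfont{V}, \polfont{U}, \polfont{N}, \polfont{W}, \polfont{Z}, placed as edge or corner pieces. We would then check exhaustively the few corner assignments that could reach 129; this mirrors Shimauchi's case split, and the ILP of Section~\ref{sec:ilp} serves as a sanity check.
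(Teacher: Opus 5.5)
\textbf{There is a genuine gap.} The step that is supposed to produce $128$ is never carried out: ``an explicit constant $\delta$'' and ``the arithmetic should give exactly $128$'' stand in for it. The route you sketch for that step is also not sharp enough to produce it.

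You control the perimeter $p$ of the enclosed region $R$, but the extremal region itself has a large perimeter. The interior of the fence in Figure~\ref{fig:sol_max_pento_comm} has area $128$ and perimeter exactly $p=52$: it has thirteen rows, each a single run, and $26$ horizontal edges. That is the same perimeter as a $13\times 13$ square of area $169$. So the extremal shape is not a chamfered $12\times 11$ rectangle with a budget of about $48$. An isoperimetric bound in terms of $p$ is off by $41$ cells, and all the content sits in the ``corner cost'' and $\delta$ that you leave undetermined.

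Your ``bounding rectangle minus corner triangles'' step also misses a main source of loss. The defects of \polfont{N}, \polfont{W}, \polfont{Z}, \polfont{P}, \polfont{X} cut notches into $R$, so $R$ is not even orthogonally convex. For the same reason your per-piece list misplaces the losses. \polfont{N}, \polfont{W}, \polfont{Z} are not lossless: they keep full length but push $1$, $1$ and $2$ cells into the interior. \polfont{X} costs two units of length. \polfont{U}, which you list as lossless, has length at most $4$.

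Even the reduction step fails as stated. Counted with multiplicity the ring has $p+4$ cells. But a one-cell notch of $R$ makes a single fence cell the diagonal cell of two convex corners. So the number of distinct fence cells in the ring can be smaller than $p+4$, and $p+4\le 60$ does not follow.

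\textbf{What the paper does instead.} It measures the fence rather than $R$. Each pentomino gets a length, namely progression $+$ protrusion $+1$:
\begin{itemize}
\item $5$ for \polfont{I}, \polfont{L}, \polfont{N}, \polfont{V}, \polfont{W}, \polfont{Z};
\item $4$ for \polfont{Y}, \polfont{T}, \polfont{F}, \polfont{P}, \polfont{U};
\item $3$ for \polfont{X}.
\end{itemize}
So the rectangle in which the fence's centre line is inscribed has perimeter at most $53$, hence at most $52$ by parity. This leaves only $13\times 13$, $14\times 12$, $15\times 11$ and $16\times 10$, with inner areas $144$, $143$, $140$ and $135$.

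Every loss then becomes a fixed, piece-determined constant:
\begin{itemize}
\item $11$ cells at the four corners, from three corners turned by \polfont{V}, \polfont{W}, \polfont{Z} at $3$ each and the fourth by a piece of family (2) or (5) at $2$;
\item exactly $6$ defect cells;
\item a gain of $1$ from the dent of \polfont{U}, used with length $3$.
\end{itemize}
Then $144-11-6+1=128$, and the other rectangles give at most $143-16=127$. Your plan has no counterpart of the length-sum/parity bound, or of the fixed corner, defect and dent count. Without them the final number is an expectation rather than a consequence.
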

	
	We prove the theorem with a series of lemmata. Before starting, however, we will assign certain geometric quantities to each pentomino. This is the main idea of Shimauchi.  He calls them \emph{progression}, \emph{protrusion} and \emph{length}. The progression is the distance it moves along its initial direction before bending. The protrusion is how  much it moves after bending. The length is then the sum of the progression and protrusion, plus one for the initial tile; this represents the contribution of the pentomino to the length of the fence.

	The pentominoes are grouped into families according to their geometric impact on the fence. The main property of these families is that pentominoes belonging to the same family can be switched without changing the size of the fence. This can be visualised, for instance, by switching the \begin{tikzpicture}[baseline={(current bounding box.center)}]
		\pic[scale=.15,rotate=0,xscale=-1] at (0,0) {pentominoL={-1}};
	\end{tikzpicture} ($\polfont{L}$) and \begin{tikzpicture}[baseline={(current bounding box.center)}]
		\pic[scale=.15,rotate=90] at (0,0) {pentominoN};
	\end{tikzpicture} ($\polfont N$) pentominoes in Figure~\ref{fig:sol_max_pento}.
	
	\begin{enumerate}
		\item Pentomino: $\{\polfont I\}$; progression: 4, protrusion: 0, length: 5.
		\[
		\begin{tikzpicture}[scale = \scfa]
			\pic[scale=\scfa,rotate=90] at (0,0) {pentominoI};
			\foreach \x/\y in {-5/0,-4/0,-3/0,-2/0,-1/0}
			{
				\fill (\x+.5,\y+.5) circle(2pt);
			}
			\draw[->] (-4.5,.5) -- (-.5,.5);
		\end{tikzpicture}
		\] 
		\item  Pentominoes: $\{ \polfont{L,N}\}$; progression: 3, protrusion: 1, length: 5 ($\polfont{N}$ has one defect---indicated by ``\begin{tikzpicture}[scale=\scfa,baseline={(current bounding box.center)}]
			\draw[pattern=north west lines, pattern color=red] (-.5,-.5) rectangle (.5,.5);
		\end{tikzpicture}''--- inside the fence).
		\[
		\begin{tikzpicture}[scale = \scfa]
			\pic[scale=\scfa,rotate=90] at (0,0) {pentominoL};
			\foreach \x/\y in {-4/0,-3/0,-2/0,-1/0,-1/1}
			{
				\fill (\x+.5,\y+.5) circle(2pt);
			}
			\draw[->] (-3.5,0.5) -- (-.5,.5);
			\draw[->] (-.5,.5) -- (-.5,1.5);
		\end{tikzpicture}
		\quad 
		\begin{tikzpicture}[scale = \scfa]
			\pic[scale=\scfa,rotate=270] at (0,0) {pentominoN={-1}};
			\draw[pattern=north west lines, pattern color=red] (2,1) rectangle (3,2);
			\foreach \x/\y in {0/0,2/0,1/0,2/1,3/1}
			{
				\fill (\x+.5,\y+.5) circle(2pt);
			}
			
			\draw[->] (0.5,0.5) -- (3.5,0.5) -- (3.5,1.5);
		\end{tikzpicture}
		\] 
		\item Pentominoes: $\{\polfont{V,W,Z}\}$; progression: 2, protrusion: 2, length:  5  ($\polfont W$ has one defect and $\polfont{Z}$ has two defects---indicated by ``\begin{tikzpicture}[scale=\scfa,baseline={(current bounding box.center)}]
			\draw[pattern=north west lines, pattern color=red] (-.5,-.5) rectangle (.5,.5);
		\end{tikzpicture}''--- inside the fence).
		\[
		\begin{tikzpicture}[scale = \scfa]
			\pic[scale=\scfa,rotate=90] at (0,0) {pentominoV};
			\foreach \x/\y in {-1/0,-1/1,-1/2,-2/0,-3/0}
			{
				\fill (\x+.5,\y+.5) circle(2pt);
			}
			\draw[->] (-2.5,0.5) -- (-.5,.5)-- (-.5,2.5);
		\end{tikzpicture}
		\quad 
		\begin{tikzpicture}[scale = \scfa]
			\pic[scale=\scfa] at (0,0) {pentominoW};
			\draw[pattern=north west lines, pattern color=red] (1,1) rectangle (2,2);
			\foreach \x/\y in {0/0,1/0,1/1,2/1,2/2}
			{
				\fill (\x+.5,\y+.5) circle(2pt);
			}
			\draw[->] (.5,0.5) -- (2.5,.5) -- (2.5,2.5);
		\end{tikzpicture}
		\quad 
		\begin{tikzpicture}[scale = \scfa]
			\pic[scale=\scfa] at (0,0) {pentominoZ};
			\draw[pattern=north west lines, pattern color=red] (1,1) rectangle (2,3);
			\foreach \x/\y in {0/0,1/0,1/1,1/2,2/2}
			{
				\fill (\x+.5,\y+.5) circle(2pt);
			}
			\draw[->] (.5,0.5) -- (2.5,.5)-- (2.5,2.5);
		\end{tikzpicture}
		\] 
		\item Pentomino: $\{\polfont Y\}$; progression: 3, protrusion: 0, length: 4.
		\[
		\begin{tikzpicture}[scale = \scfa]   
			\pic[scale=\scfa,rotate=270] at (0,0) {pentominoY};
			\foreach \x/\y in {0/-1,1/-1,2/-1,2/-2,3/-1}
			{
				\fill (\x+.5,\y+.5) circle(2pt);
			}
			\draw[->] (0.5,-.5) -- (3.5,-.5);
		\end{tikzpicture}
		\] 
		\item Pentominoes: $\{\polfont{T,F,P}\}$; progression: 1, protrusion: 2, length: 4  ($\polfont{P}$ has one defect---indicated by ``\begin{tikzpicture}[scale=\scfa,baseline={(current bounding box.center)}]
			\draw[pattern=north west lines, pattern color=red] (-.5,-.5) rectangle (.5,.5);
		\end{tikzpicture}''--- inside the fence).
		\[
		\begin{tikzpicture}[scale = \scfa]
			\pic[scale=\scfa,rotate=180] at (0,0) {pentominoT};
			\foreach \x/\y in {-3/-3,-2/-3,-2/-1,-2/-2,-1/-3}
			{
				\fill (\x+.5,\y+.5) circle(2pt);
			}
			\draw[->] (-2.5,-2.5) -- (-1.5,-2.5) -- (-1.5,-.5);
		\end{tikzpicture} \quad 
		\begin{tikzpicture}[scale = \scfa]
			\pic[scale=\scfa,rotate=180] at (0,0) {pentominoF};
			\foreach \x/\y in {-3/-3,-2/-3,-2/-2,-2/-1,-1/-2}
			{
				\fill (\x+.5,\y+.5) circle(2pt);
			}
			\draw[->] (-2.5,-2.5) -- (-1.5,-2.5) -- (-1.5,-.5);
		\end{tikzpicture} \quad 
		\begin{tikzpicture}[scale = \scfa]
			\pic[scale=\scfa,rotate=180] at (0,0) {pentominoP};
			\draw[pattern=north west lines, pattern color=red] (-2,-2) rectangle (-1,-1);
			\foreach \x/\y in {-2/-3,-2/-2,-1/-1,-1/-2,-1/-3}
			{
				\fill (\x+.5,\y+.5) circle(2pt);
			}
			\draw[->] (-1.5,-2.5) -- (-.5,-2.5) -- (-.5,-.5);
		\end{tikzpicture}
		\] 
		For these, we can also look at them with progression 2 and protrusion 1 by looking at them  rotated $90^\circ$.
		\item Pentominoes: $\{\polfont{X,U}\}$; progression: 2, protrusion: 0, length: 3 ($\polfont{X}$ has one defect---indicated by ``\begin{tikzpicture}[scale=\scfa,baseline={(current bounding box.center)}]
			\draw[pattern=north west lines, pattern color=red] (-.5,-.5) rectangle (.5,.5);
		\end{tikzpicture}''---inside the fence, and $\polfont{U}$ has a dent---denoted ``\begin{tikzpicture}[scale=\scfa,baseline={(current bounding box.center)}]
			\draw[pattern=north east lines, pattern color=green] (-.5,-.5) rectangle (.5,.5);
		\end{tikzpicture}''--- leaving an extra place inside the fence). 
		\[
		\begin{tikzpicture}[scale = \scfa]
			\pic[scale=\scfa] at (0,0) {pentominoX};
			\draw[pattern=north west lines, pattern color=red] (1,2) rectangle (2,3);
			\foreach \x/\y in {0/1,1/0,1/1,1/2,2/1}
			{
				\fill (\x+.5,\y+.5) circle(2pt);
			}
			\draw[->] (0.5,1.5) -- (2.5,1.5);
		\end{tikzpicture}
		\quad 
		\begin{tikzpicture}[scale = \scfa]
			\pic[scale=\scfa] at (0,0) {pentominoU};
			\draw[pattern=north east lines, pattern color=green] (1,1) rectangle (2,2);
			\foreach \x/\y in {0/0,1/0,0/1,2/0,2/1}
			{
				\fill (\x+.5,\y+.5) circle(2pt);
			}
			\draw[->] (0.5,1.5) -- (2.5,1.5);
		\end{tikzpicture}
		\]
		
	\end{enumerate}
	The pentomino $\polfont U$ could also be placed in the family (5) with $\{\polfont{T,F,P}\}$, where it would have length 4. The price to pay is a defect. One advantage of placing it in the family (6) is that it gains a dent. The following lemma will make this choice clear, as it will show that we can use one polyomino suboptimally and reach the maximum perimeter.

	\begin{lemma}\label{lem:perimeter_pentomino}
		The perimeter of a fence using the 12 pentominoes is bounded above by $53$.
	\end{lemma}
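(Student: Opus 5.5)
The plan is to read the perimeter of a fence as the number of fence cells met by the inner boundary walk. Each pentomino's share of that walk is then bounded by its length, as assigned in the family list above, and we sum. Listing the lengths from families (1)--(6):
\[
\underbrace{5}_{\polfont I}+\underbrace{5+5}_{\polfont{L,N}}+\underbrace{5+5+5}_{\polfont{V,W,Z}}+\underbrace{4}_{\polfont Y}+\underbrace{4+4+4}_{\polfont{T,F,P}}+\underbrace{3+3}_{\polfont{X,U}} = 52.
\]
The only piece whose length depends on how it is used is $\polfont U$. Placed in family (5), as in the remark before the lemma, it has length $4$. The bound to prove is therefore $52+1=53$.

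First I will fix the precise notion of perimeter. Take the cyclic sequence of fence cells traversed by the path that runs just outside the enclosed region. This is a closed rook path, since the inside and outside components share no vertex. The fence configuration cuts this cycle into consecutive blocks, one per pentomino. So the perimeter is the sum over the twelve pieces of the number of cells of that piece visited by the walk.

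The core step is a per-piece bound: the block belonging to a piece consists of at most ``progression $+$ protrusion $+1$'' of its cells. I would argue that the walk restricted to one piece is a simple path that goes straight, turns at most once towards the interior, and continues straight. Any cell the walk could add beyond such a monotone staircase either sticks out away from the region or is a bump. Bumps are exactly the marked \emph{defects} (the extra cell of $\polfont Y$, $\polfont T$, $\polfont F$, $\polfont X$, and so on), and they lie off the boundary walk. Granting this, the bound reduces to a finite check on the twelve shapes: find the longest monotone rook path inside each pentomino. For $\polfont I,\polfont L,\polfont N,\polfont V,\polfont W,\polfont Z$ this path covers all five cells. For $\polfont Y,\polfont T,\polfont F,\polfont P$ it covers four, which also covers the rotated reading with progression $2$ and protrusion $1$. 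For $\polfont X$ it covers three. For $\polfont U$ it covers four, along the path $(0,1),(0,0),(1,0),(2,0)$. Summing these maxima gives $53$.

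I expect the main obstacle to be this per-piece bound, in particular excluding a block that doubles back inside one piece. The dangerous case is the $\polfont U$ used as a cup, whose simple path through all five cells turns twice in the same rotational sense. I would handle it as follows. If the walk made a $180^\circ$ turn inside a piece, the cell enclosed by that turn would be a dent. A dent cannot increase the count beyond the staircase bound, because the two arms of the turn run in opposite directions and so do not both advance along the fence. Concretely, I would count one boundary cell per unit of net advance plus one per convex turn, and show that a double turn contributes no more than the monotone reading. If this proves too delicate in general, the fallback is a direct case check on the only pentominoes where it can happen, $\polfont U$ and possibly $\polfont P$, which is finite and short.
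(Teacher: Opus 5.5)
\textbf{There is a genuine gap.} Your final sum matches the paper's proof, which only adds the maximal lengths with $\polfont{U}$ counted at $4$. The problem is the framework you build to justify it. For your notion of perimeter (fence cells met by the walk hugging the enclosed region), the per-piece bound is false.

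First, defects are, in the paper's sense, fence cells lying on the \emph{interior} side of the fence. So they touch the enclosed region and are met by that walk; they do not lie off it. Also, the extra cells of $\polfont{Y}$, $\polfont{T}$, $\polfont{F}$ are not defects, since they point outward.

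Second, the cup you flag as dangerous is exactly how $\polfont{U}$ is used in the extremal fence. In Figure~\ref{fig:sol_max_pento_comm} it occupies $(9,1),(10,1),(11,1),(9,2),(11,2)$, and the dent $(10,2)$ belongs to the interior. The walk around the enclosed region must therefore pass through all five cells, not four. Likewise the inward arm $(14,10)$ of $\polfont{X}$ is met, giving $4$ cells for $\polfont{X}$, not $3$. The other pieces appear to reach their maximal lengths (I did not check every junction), so your count for the paper's own $128$-fence comes to about $55>53$. Either way, $\polfont{U}$ and $\polfont{X}$ already break your per-piece bounds. So the statement you set out to prove is false, and no case check on $\polfont{U}$ and $\polfont{P}$ can repair it. Your claim that a dent ``cannot increase the count'' is precisely what fails: this dent raises the cell count of $\polfont{U}$ from $3$ to $5$.

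The paper's perimeter is a different quantity. It is the length of Shimauchi's idealised rectilinear fence line, the dotted line in Figure~\ref{fig:sol_max_pento_comm}, which has length $52$. This line is not a walk through fence cells. It passes through uncovered cells such as $(13,2),(14,2)$ at the corner of $\polfont{Z}$ and $(15,14)$ at the corner of $\polfont{W}$, and through the dent $(10,2)$ of $\polfont{U}$. Each piece advances it by at most progression $+$ protrusion $+1$ cells. Anything that pokes inward is charged against the area as a defect rather than lengthening the line.

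The ``net advance'' idea in your last paragraph is essentially this notion, so commit to it from the start instead of counting cells. With that definition the lemma is the bookkeeping sum the paper gives. To make it rigorous, you must show that every fence admits such a line with each piece's share at most its length. The paper takes this as part of its model rather than proving it. A bound on the number of fence cells touching the interior is not the right target.
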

	\begin{proof}
		Taking the maximum contribution to the length of all the pieces, so in particular taking $\polfont{U}$ to have length 4, we get $ 5\times |\{\polfont{I,L,N,V,W,Z}\}|+4\times|\{\polfont{Y,T,F,P,U}\}| + 3\times|\{\polfont{X}\}| = 53$.
	\end{proof}
	As 53 is odd, it cannot be the perimeter of a rectangle; hence, we can take 52 as the maximum. In practice, this will be done by placing $\polfont{U}$ in the same category as $\polfont X$, thus using it to add $3$ to the length. This choice of $\polfont{U}$ is made because it then gains a \textit{dent}, whereas if we used it to have progression 2 and protrusion 1 (so as the family (5) with ``L''-shape), then we would have a defect inside.

	\begin{lemma}\label{lem:maxsol_square}
		There is an extremal pentomino fence with an inner area of 128. 
	\end{lemma}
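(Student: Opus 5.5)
This is an existence statement, so I will prove it by construction: exhibit one fence and verify that its inner area is $128$. The natural candidate is the fence of Figure~\ref{fig:sol_max_pento}, with configuration $\polfont{FTYPWXLZUNVI}$. I would rather explain it than just display it, so I will build it around the bookkeeping of progression, protrusion, length, defects and dents set up before Lemma~\ref{lem:perimeter_pentomino}.

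First, I fix the target shape. The lemma allows a total length of $52$, which is even, so it is the length budget of a rectilinear closed fence. With $\polfont{U}$ used in family (6), the lengths sum to $5\cdot 6+4\cdot 4+3\cdot 2=52$. I would aim for a fence whose inner boundary is close to an $11\times 13$ rectangle. Whole corner squares cost area, and the pieces that bend (the families with nonzero protrusion) can turn corners without wasting tiles. The predicted inner area is then the rectangle area, minus the defect tiles that the pieces $\polfont{N}$, $\polfont{W}$, $\polfont{Z}$, $\polfont{P}$, $\polfont{X}$ push inward, plus the dent of $\polfont{U}$. Bent corner pieces also cut off or add corner cells, so the rectangle will need adjusting. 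A back-of-envelope count should land on $128$. If a first choice of side lengths gives the wrong total, I would redistribute progressions between the sides.

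Second, I assign pieces to sides. I read the configuration $\polfont{FTYPWXLZUNVI}$ clockwise. The four corners go to bending pieces such as $\polfont{V}$, $\polfont{W}$, $\polfont{P}$ and $\polfont{Z}$. For each side, the progressions and protrusions feeding into it must sum to that side's length, so that the fence closes up. This is a small system of linear equations in the side lengths. The families make it flexible, since pieces within a family can be swapped, as with $\polfont{L}$ and $\polfont{N}$.

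Third, I verify concretely. I place the pieces at the coordinates of Figure~\ref{fig:sol_max_pento}, which I would list in a table. I then check three things: the pieces do not overlap, their union is rookwise connected, and the complement splits into exactly two components that share no vertex, which is the fence definition. Finally, I count the bounded component row by row and confirm the total is $128$.

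The main obstacle is the vertex condition together with the defect accounting. Pieces meeting diagonally at corners can make the inside and outside touch at a vertex, as in Figure~\ref{fig:def_fences}. The protruding cells of $\polfont{X}$ and $\polfont{Z}$ also have to point inward without colliding with neighbouring pieces. I would resolve this by placing the pieces with large defects in the middle of long sides, and by checking each junction between consecutive pieces locally.
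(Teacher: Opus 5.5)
Your proposal is correct and takes the same approach as the paper. The paper proves the lemma simply by exhibiting the fence of Figure~\ref{fig:sol_max_pento} (configuration $\polfont{FTYPWXLZUNVI}$), which is your third step. Your first two steps are unnecessary, and their heuristic target is slightly off: that fence is inscribed in the $13\times 13$ rectangle of Lemma~\ref{lem:rectangle_inscribing_pento}, with inner box $12\times 12$ giving $144-11-6+1=128$, whereas an $11\times 13$ inner box would give at most $143-11-6+1=127$ under the same accounting.
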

	\begin{proof}
		Figure~\ref{fig:sol_max_pento} gives a solution.
	\end{proof}
	
	We prove a corollary of the two previous results, which we used in optimizing the ILP formulation in Section~\ref{sec:ilp}.
	
	\begin{corollary}\label{coro:sizeboardpentomino}
		Every extremal pentomino fence fits within a $20\times 20$ square.
	\end{corollary}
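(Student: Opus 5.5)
The plan is to combine the perimeter bound of Lemma~\ref{lem:perimeter_pentomino} with the area $128$ from Lemma~\ref{lem:maxsol_square}, and show that a fence too wide or too tall cannot enclose $128$ tiles. Concretely, I would measure a fence through the path of its ``length'' cells, i.e.\ the bounding box of the centreline traced by the progressions and protrusions of the pieces. If that box has horizontal extent $a$ and vertical extent $b$ (in cells, counting the centreline cells), then the closed centreline has length at least $2(a-1)+2(b-1)$. By Lemma~\ref{lem:perimeter_pentomino} this is at most $53$, and in fact at most $52$ by parity, since a closed lattice path has even length. Hence $a+b\le 28$.

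Next I would bound the enclosed area. The interior lies strictly inside the centreline box, so it fits in an $(a-2)\times(b-2)$ rectangle. Its area is therefore at most $(a-2)(b-2)$, minus any defects and plus at most one dent from \polfont{U}; for this step I would crudely use the bound $(a-2)(b-2)+1$. If the fence is extremal, this quantity must be at least $128$. Setting $s=a-2$ and $t=b-2$, we have $s+t\le 24$ and $st\ge 127$. From $st\ge127$ and $s+t\le24$, the roots of $x^2-24x+127$ are $12\pm\sqrt{17}$, which gives $s,t\in[7.9,16.1]$. So each side satisfies $s\le16$, and $a=s+2\le 18$. Adding the pieces' thickness outside the centreline (each pentomino sticks out at most one or two cells beyond it, e.g.\ the bumps of \polfont{Y}, \polfont{F}, \polfont{T} and \polfont{X}) adds at most one cell on each side, which gives $a\le 20$. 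The same holds for the height.

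The main obstacle is making the relation between ``length'' in Lemma~\ref{lem:perimeter_pentomino} and the actual geometric bounding box rigorous. The length counts progression steps, but pieces have bumps sticking outward and the centreline may not be a rectangle. My first attempt is to argue that the projection of the centreline onto each axis is covered at least twice, once going and once returning, so that length $\ge 2(\text{width}+\text{height})$ of the centreline box. The remaining care is in showing that outward protrusions add at most one cell per side, which I would check piece by piece using the families listed above. If the slack is too tight to reach $20$, I would instead use the sharper perimeter $52$ together with the fact that the defects of \polfont{N}, \polfont{W}, \polfont{Z}, \polfont{P} and \polfont{X} reduce the area. That gives room to absorb the extra cell.
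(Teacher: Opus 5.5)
Your argument is correct at the paper's level of rigour, but it takes a different route. The paper argues by contradiction. A fence that does not fit in a $20\times 20$ square is declared to be inscribed in a rectangle or in a rhombus of diameter at least $21$, and each case is dismissed by an explicit area below $128$: a $21\times 6$ rectangle, or a rhombus with diagonals $21$ and $12$. You instead apply the isoperimetric inequality for closed rook loops, $L\ge 2(a-1)+2(b-1)$, to the axis-parallel bounding box of the centreline. You then turn $s+t\le 24$ and $st\ge 127$ into $s\le 12+\sqrt{17}$, and only afterwards add the margin taken by the bumps.

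Your route buys three things:
\begin{itemize}
\item There is no case split. Staircase (rhombus-like) loops obey the same inequality, so the unjustified dichotomy ``rectangle or rhombus'' disappears.
\item You keep the centreline box separate from the actual footprint of the pieces. The board size in Section~\ref{sec:ilp} depends on the footprint, and the paper blurs the two by taking the fence's extent $21$ as a side of the perimeter rectangle.
\item Your conventions agree with Lemma~\ref{lem:rectangle_inscribing_pento}: its $17\times 9$ rectangle is exactly your extreme case $s=16$, $t=8$.
\end{itemize}
The paper's version is shorter but rests entirely on the geometric picture.

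Because your chain ends with no slack, the one-cell margin carries the whole conclusion. With ``one or two cells'' you would only get $22$. Also, do not reuse $a$ for the footprint: you have $a\le 18$ and width at most $a+2$. For the placements drawn in the six families the margin is indeed one, since every off-centreline cell is edge-adjacent to the centreline.

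A piece used outside its family can, however, protrude further at a price the parity drop $53\to 52$ absorbs. For example, \polfont{T} with its bar on the centreline and its stem pointing outward has length $3$ and sticks out two cells. The crude bound $(a-2)(b-2)+1$ then still allows $s=16$, hence a width of $21$.

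So here you genuinely need your fallback, or the paper's remark that extremal fences use every piece optimally. In this situation the other eleven pieces are at maximal length. In particular \polfont{U} is in its length-$4$ position, so there is no dent. The six defects then give $st\ge 134$, so $s\le 15$ and the width is again at most $20$.
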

	\begin{proof}
		Suppose that we have an extremal fence that does not fit within a $20\times 20$ square. Then the diameter of the fence must be more than 21 tiles long. Then it is either inscribed in a rectangle or in a rhombus.
		
		If it is inscribed in a rectangle, with the restriction of the perimeter of Lemma~\ref{lem:perimeter_pentomino}, then the biggest rectangle would have to be of size $21\times 6$, and the area (126) is below the solution found in Lemma~\ref{lem:maxsol_square}. 
		
		If it is embedded within a rhombus whose big diagonal has a length of 21, then to respect the perimeter constraints of Lemma~\ref{lem:perimeter_pentomino}, the rhombus would need to have a small diagonal of maximum size 16, so that the perimeter of the rhombus is 52. But this is not achievable with the pentominoes, because this would mean a side distance of 13 units diagonally, which requires approximately twice the number of tiles (for example $\polfont{I,J,N,V,Y}$ can make such a movement at the cost of 25 tiles), but even $21\times 12$ rhombi have an area of only $21\times 12/2=126$, which is not sufficient. 
		
		Thus any extremal fence will fit in a $20­\times 20$ square.
	\end{proof}

	\begin{remark}
		Any extremal fence must use the optimal geometric properties of each pentomino. This introduces defects (from $\polfont{N,W,Z,P,X}$) inside the fence that unavoidably reduce the inner area, precisely by 6. On the other hand, $\polfont{U}$ has a dent, so it will allow for one extra square in the inner area.
	\end{remark}

	\begin{figure}[h!]
		\centering
		\begin{tikzpicture}[scale=\scfa]
			\draw[help lines] (1,1) grid (17,17);
			\pic[scale = \scfa,rotate=0] at (2,3){pentominoV};
			\pic[scale=\scfa,rotate=0] at (2,6) {pentominoI};
			\pic[scale = \scfa,rotate =0] at (1,11) {pentominoF};
			\pic[scale = \scfa,rotate=90] at (6,14){pentominoT};   \pic[scale = \scfa,rotate=270] at (6,15){pentominoY={-1}};
			\pic[scale = \scfa,rotate=270] at (10,16) {pentominoP};
			\pic[scale = \scfa,rotate=90] at (16,12) {pentominoW};
			\pic[scale = \scfa,rotate=0] at (14,9) {pentominoX};
			\pic[scale = \scfa,rotate=0] at (16,5) {pentominoL={-1}};
			\pic[scale = \scfa,rotate=270] at (12,2) {pentominoZ={-1}};
			\pic[scale = \scfa,rotate=0] at (9,1) {pentominoU};
			\pic[scale = \scfa,rotate=90] at (9,2) {pentominoN};
			\foreach \x/\y in {2/2,3/2,4/2,2/14,2/15,15/2,15/3,15/4,15/15,14/15,13/15}
			{
				\fill[red] (\x+.5,\y+.5) circle(2pt);
			}
			
			\foreach \x/\y in {6/3,12/3,13/3,14/10,14/13,11/14}
			{
				\draw[pattern=north west lines, pattern color=red] (\x,\y) rectangle (\x+1,\y+1);
			}
			\draw[pattern=north east lines, pattern color=green] (10,2) rectangle (11,2+1);
			\draw[thick, dotted,->] (2+.5,3+.5)--(5+.5,3+.5);
			\draw[thick, dotted,->](5+.5,3+.5)--(5+.5,2+.5)--(14+.5,2+.5);
			\draw[thick, dotted,->](14+.5,2+.5)--(14+.5,5+.5)--(15+.5,5+.5);
			\draw[thick, dotted,->](15+.5,5+.5)--(15+.5,14+.5)--(12+.5,14+.5);
			\draw[thick, dotted,->](12+.5,14+.5)--(12+.5,15+.5)--(3+.5,15+.5);
			\draw[thick, dotted,->](3+.5,15+.5)--(3+.5,13+.5)--(2+.5,13+.5)--(2+.5,3+.5);
		\end{tikzpicture}
		\caption{Example of one of the extremal pentomino fences 
			 of 128 tiles with corner losses, defects and dents indicated, and the length drawn.}
		\label{fig:sol_max_pento_comm}
	\end{figure}

	\begin{lemma}\label{lem:rectangle_inscribing_pento}
		Extremal pentomino fences must have perimeter $52$ and are inscribed in one of the following rectangles (with respective inner areas): $13\times 13$ (144), $14\times 12$ (143), $15\times 11$ (140), and $16\times 10$ (135).
	\end{lemma}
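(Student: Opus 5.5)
We first set up the geometry. Following the length bookkeeping above, we attach to each fence the closed rectilinear path through the centres of its tiles (the dotted path in Figure~\ref{fig:sol_max_pento_comm}). We take its bounding rectangle, measured on this centre path, to be $a\times b$. We then compare the inner area with the number of cells strictly inside that rectangle.

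A closed rectilinear lattice path has even length. It is also at least the perimeter $2(a+b)$ of its bounding box, since its horizontal steps cover the width twice and its vertical steps cover the height twice. Lemma~\ref{lem:perimeter_pentomino} bounds the length by $53$, so $2(a+b)\le 52$, i.e.\ $a+b\le 26$. The enclosed region lies inside the rectangle spanned by the path, which contains at most $(a-1)(b-1)$ unit cells strictly within it. That count is the area listed in the statement: with $a+b=26$ we get $12\cdot 12=144$ for $13\times13$, $13\cdot 11=143$ for $14\times 12$, $14\cdot 10=140$ for $15\times 11$, $15\cdot 9=135$ for $16\times 10$, and $16\cdot 8=128$ for $17\times 9$.

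The next step is to rule out every other bounding rectangle using Lemma~\ref{lem:maxsol_square}: an extremal fence encloses at least $128$ tiles. If $a+b\le 25$, the inner area is at most $\max(a-1)(b-1)\le 11\cdot 12=132$. This is the weakest point: it does not immediately go below $128$. Such a bound would not force perimeter exactly $52$, so the defect accounting is needed.

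For that accounting we use the Remark. Corner losses (the red dots in Figure~\ref{fig:sol_max_pento_comm}), together with the defects of $\polfont{N,W,Z,P,X}$, necessarily remove cells from the inscribed rectangle. Only the single dent of $\polfont U$ gives one cell back. To reach the required losses, we argue as follows.
\begin{enumerate}
\item Since the fence is a simple closed path, each of its four extreme sides of the bounding box must be turned onto. This forces at least one lost cell near each of the four corners, unless a piece bends exactly there.
\item Using the optimal length of each piece forces its defects, a net loss of $6-1=5$ cells.
\item Each piece used sub-optimally shortens the path by at least $1$, hence by $2$ for parity reasons, moving us to a smaller rectangle.
\end{enumerate}
Making this rigorous is the main obstacle. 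It needs a case check of which pieces can sit in corners without loss, which is exactly why the corners go to $\polfont{V}$ and the $\polfont{L}$-shaped families in the figure. We expect the conclusion to be a total loss of at least about $16$ relative to $(a-1)(b-1)$ whenever the perimeter is $52$.

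Finally we combine the bounds. With $a+b\le 25$ the best value is $132$ minus these losses, which falls below $128$; the same happens when the losses are reduced by placing $\polfont U$ suboptimally. So the perimeter must be exactly $52$. Among rectangles with $a+b=26$, $17\times 9$ gives $128$ before any loss and is excluded, and thinner rectangles are worse. The four listed rectangles remain, and these match Corollary~\ref{coro:sizeboardpentomino}.
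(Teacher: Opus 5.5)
Your route is the paper's. Parity turns the bound $53$ of Lemma~\ref{lem:perimeter_pentomino} into a perimeter of at most $52$. The rectangles with $a+b=26$ are tabulated by $(a-1)(b-1)$. Then $17\times 9$ and every box with $a+b\le 25$ (at most $12\cdot 11=132$ cells) are discarded using the Remark.

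The difference lies in what you believe still has to be proved. The paper uses only the net defect count of the Remark: the six defect cells of \polfont{N}, \polfont{W}, \polfont{Z}, \polfont{P}, \polfont{X} minus the dent of \polfont{U}, a net loss of $5$. This already gives at most $123$ for $17\times 9$ and at most $127$ for $13\times 12$. The $11$ cells of corner loss enter only in the proof of Theorem~\ref{thm:maxpento}, to discard $14\times 12$, $15\times 11$ and $16\times 10$. So the corner estimate you call the main obstacle (your expected total of about $16$) is not needed for this lemma, at the paper's level of argument.

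Two points of your accounting need correcting.

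\textbf{Item~(3).} It is false as stated. Since $53$ is odd, one unit of sub-optimality is absorbed by parity without shrinking the rectangle. This is exactly the unit \polfont{U} spends to trade its defect for a dent. Only a second unit forces perimeter at most $50$.

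\textbf{The case $a+b\le 25$.} At perimeter $50$ there can be up to three units of slack. So the premise of your item~(2), that every piece sits at its optimal length and therefore carries its defects, is no longer automatic there. For example, traversing \polfont{Z} with progression $1$ and protrusion $2$ costs one unit of length. The path then runs through four of its cells and the fifth lies on the outer side, so both of its defects disappear. Your final paragraph only accounts for the slack spent on \polfont{U}, and your corner estimate was formulated only for perimeter $52$.

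The paper's ``as remarked above'' passes over the same point. A fully rigorous treatment of this case needs a loss bound that survives three units of slack. That is where a genuine corner-loss argument of the kind you sketch would actually be required.
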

	\begin{proof}
		An extremal fence must have at least an area of $128$ by Lemma~\ref{lem:maxsol_square}. The area of a $17\times 9$ rectangle is 128, but the pentominoes unavoidably lower the area as remarked above, therefore, this rectangle is disqualified. Furthermore, note that if the perimeter is 50, then the maximum inner rectangle is $13\times 12$, which has an inner area of $132$, and the pentomino fence would also diminish it below 128, as remarked above. 
	\end{proof}
	
We are now ready for the proof of the main theorem of the section.
	\begin{proof}[Proof of Theorem~\ref{thm:maxpento}]
		Let us assume we have a solution inscribed in a $13\times 13$ square. The question becomes where to place the pieces so that we approach the maximum area. The heuristic is to place the straightest pieces on the sides and the bendiest in the corners.  Hence, the pieces of family (1), (4), and (6) should be placed on the side to maximize. It is also clear that the three pieces of family (3) are better placed on the corner, since they are the most corner-like. Then one corner remains from the remaining families (2) or (5). (In Figure~\ref{fig:sol_max_pento}, we see that $``\polfont{T}''$ is the chosen one.)

		Precisely, this choice means that we will depart from 11 (3 for each of the corners of the family (3) and 2 for the corner of the family (2) or (5)) from the inner area of the square. Then, we can compute that the maximum area will be $144-11-6+1 = 128$ and shall happen in the solutions inscribed in a $13\times 13$ square, as the arguments with the corners still hold. 
		
		Now, for all other possibilities of Lemma~\ref{lem:rectangle_inscribing_pento}, we compute the maximum area, keeping in mind that we must diminish by 11 for the corner losses, 6 for the defects, and add one for the dent. Then we see that the solution must be in the $13\times 13$ as the second possibility, the $14\times 12$ rectangle, has an inner area of 143, but $143-11-6+1 = 127<128$.
	\end{proof}

	\begin{challenge}\label{challenge:proof_tetrahexes}
		Adapt Shimauchi's proof to the tetrahex case. That is, prove that an extremal fence for the tetrahex fence challenge encloses an area of 35 tiles.
	\end{challenge}
	
	\begin{challenge}\label{challenge:proof_hexiamonds}
		Adapt Shimauchi's proof to fences built with the 12 hexiamonds. In particular, prove that an extremal fence in the hexiamond fence challenge encloses an area of 116 tiles.
	\end{challenge}

	\section{Modelling with Integer Linear Programming}\label{sec:ilp}
	
	In this section, we present the integer linear programming model that we have implemented\footnote{A similar model for the  tetromino fence challenge without allowing flipping was done by D.W.~\cite{DWstack}. The model presented here is more general, incorporating other tessellations, flipping and higher dimensions.} for exploring and finding solutions to the fence challenges. 
	
	First, we choose a dimension $d\geq 2$ and a tessellation $T$ of the $d$-dimensional space. We denote $P_T^m$ the number of polyforms of size $m$ in the tessellation T. For example, $P_\square^5 = 12$ for the square tessellation of the plane. 
	
	Then, we fix a finite region $R$ of the space in which the program will search for potential fences. Choosing the best region is a delicate task: if $R$ is too big, it will slow the program, but if it is too small, we run the risk of missing optimal solutions.  For example, a good choice of $R$ for the pentomino fence challenge ($d=2$ and $T = T_{\square}$ and $m=5$ would be a square of inner dimensions $20\times 20$ by Corollary~\ref{coro:sizeboardpentomino}---technically then the total region in the implementation would be a $22\times 22$ square due to the interior constraints). Since there are finitely-many tiles in $R$, we can enumerate them. This means that every tile $l\in R$ will be labelled by its number $l$ in the enumeration.
	
	Now, the variables necessary for our model can be defined. Given a fence $F$, for every tile $l\in R$ we define the binary variable $x_l$, which is true if and only if $l$ is part of the interior of the fence. For every tile $l$ we define the binary variable $y_l$, which is true if and only if $l$ is part of the fence $F$ itself, meaning that it is covered by a polyform. 
	
	Let $S_T$ be the set of possible symmetries of a polyform in a tessellation $T$ and $s_T:=|S_T|$. For example, $s_T=8$ for the square tessellation of the plane, and  we give all the potential symmetries of the \begin{tikzpicture}[baseline={(current bounding box.center)}]
		\pic[scale=.15,rotate=0] at (0,0) {pentominoF};
	\end{tikzpicture} ($\polfont F$) pentomino in Figure~\ref{fig:ex_sym}. 
	
	\begin{figure}[h]
		\centering
		\begin{tikzpicture}[scale=\scfa,baseline={(current bounding box.center)}]
		\pic[scale=\scfa,rotate=0] at (0,0) {pentominoF};.
		\end{tikzpicture}
		\qquad 
		\begin{tikzpicture}[scale=\scfa,baseline={(current bounding box.center)}]
			\pic[scale=\scfa,rotate=90] at (0,0) {pentominoF};.
		\end{tikzpicture}
		\qquad
		\begin{tikzpicture}[scale=\scfa,baseline={(current bounding box.center)}]
			\pic[scale=\scfa,rotate=180] at (0,0) {pentominoF};.
		\end{tikzpicture}
		\qquad
		\begin{tikzpicture}[scale=\scfa,baseline={(current bounding box.center)}]
			\pic[scale=\scfa,rotate=270] at (0,0) {pentominoF};.
		\end{tikzpicture}	\medskip
	
		\begin{tikzpicture}[scale=\scfa,baseline={(current bounding box.center)}]
			\pic[scale=\scfa,rotate=0] at (0,0) {pentominoF={-1}};.
		\end{tikzpicture}
		\qquad 
		\begin{tikzpicture}[scale=\scfa,baseline={(current bounding box.center)}]
			\pic[scale=\scfa,rotate=270] at (0,0) {pentominoF={-1}};.
		\end{tikzpicture}
		\qquad
		\begin{tikzpicture}[scale=\scfa,baseline={(current bounding box.center)}]
			\pic[scale=\scfa,rotate=180] at (0,0) {pentominoF={-1}};.
		\end{tikzpicture}
		\qquad 
		\begin{tikzpicture}[scale=\scfa,baseline={(current bounding box.center)}]
		\pic[scale=\scfa,rotate=90] at (0,0) {pentominoF={-1}};.
		\end{tikzpicture}
		\caption{The 8 potential symmetries of the pentomino $\polfont F$  are given by all the rotations of $0^\circ$, $90^\circ$, $180^\circ$, $270^\circ$, and their combinations with a mirror reflection.}
		\label{fig:ex_sym}
	\end{figure}
	We note that there are some redundancies, as some polyforms have symmetries. It is possible to fine-tune the model by taking them into account, but for the sake of simplicity, we do not.

	We fix an ordering of the polyforms and an ordering of their potential orientations coming from $S_T$. If $P=P_T^m$ is the number of polyforms given, then for every tile $l$ we define the binary variable matrix $z_l$ of dimensions $P \times s_T$. Here the entry $(p, r)$ is true if and only if the $p$-th polyform is placed in orientation $r\in S_T$ with the top left corner on $l$. 
	
	It will be useful to properly  define the notion of neighbourhood. We say two tiles are \emph{neighbours} if they share a border of lower dimension. For $d=2$, this means that two tiles are neighbours if they share an edge (a 1-dimensional border) or a vertex (a 0-dimensional border). For $d=3$, they are also neighbours if they share a face (a 2-dimensional border). The number of borders depends on the tessellation, as shown in Figure~\ref{fig:neighbourhood_tiles}.

	\begin{figure}[h]
		\centering
		\begin{tikzpicture}[scale=\scfa,baseline={(current bounding box.center)}]
			\draw[help lines] (0,0) grid (5,5);
			\draw[fill=\couleurb] (2,2) rectangle (3,3);
			\foreach \x/\y in {1/1,1/2,1/3,2/1,2/3,3/1,3/2,3/3}
			{
				\fill[pattern=north east lines, pattern color=teal] (\x,\y) rectangle (\x+1,\y+1) ;
			}
		\end{tikzpicture}
		\qquad
		\begin{tikzpicture}[hexa/.style= {shape=regular polygon,
				regular polygon sides=6,
				minimum size=\hexscfa cm, draw,
				inner sep=0,anchor=south,
			},scale=\hexscfa,,baseline={(current bounding box.center)}]
			
			\foreach \j in {0,...,4}{%
				\ifodd\j 
				\foreach \i in {0,...,3}{\node[hexa] (h\j;\i) at ({\j/2+\j/4},{(\i+1/2)*sin(60)}) 
					{}        
					;}        
				\else
				\foreach \i in {1,...,3}{\node[hexa] (h\j;\i) at ({\j/2+\j/4},{\i*sin(60)}) 
					{} 
					;}
				\fi}         
			\node[hexa, fill=\couleurb] (h2;2) at ({2/2+2/4},{2*sin(60)}) {}; 
			
			\foreach \x/\y in {1/2,1/3,2/1.5,2/3.5,3/2,3/3} {
				\node[hexa,pattern=north east lines, pattern color=teal] (h\x;\y)  at ({\x/2+\x/4},{(\y-1/2)*sin(60)}) {};
			}
			
		\end{tikzpicture}
		\qquad
 \begin{tikzpicture}[scale=\triscfa,baseline={(current bounding box.center)}]
\newcommand{\nbx}{7}
\newcommand{\nby}{7}

\clip (tri cs: x=2,y=0) -- (tri cs:x=5,y=0) -- (tri cs:x=5,y=3) -- (tri cs: x=3,y=5) -- (tri cs: x=0,y=5) -- (tri cs: x=0,y=2) -- cycle; 
\foreach \j in {-1,...,\the\numexpr\nby-1}
{\foreach \i in {-1,...,\the\numexpr\nbx-1} 
	{
}}
\foreach \j in {-1,...,\the\numexpr\nby-1}
{\foreach \i in {-1,...,\the\numexpr\nbx-1} 
	{
		\draw (tri cs: x=\i,y=\j) -- (tri cs:x=\i,y=\nby-1);
		\draw (tri cs: x=\i,y=\j) -- (tri cs:x=\nbx-1,y=\j);
		\draw (tri cs: x=\i,y=\j) -- (tri cs:x=\j,y=\i);
}    }
\draw[fill=\couleurb] (tri cs:x=3,y=2) -- 
(tri cs:x=3-1,y=2) -- 
(tri cs:x=3-1,y=2+1)-- cycle;
\draw[ pattern=north east lines, pattern color=teal] 
(tri cs:x=2,y=2) -- 
(tri cs:x=2-1,y=2) -- 
(tri cs:x=2-1,y=2+1)-- cycle;
\draw[ pattern=north east lines, pattern color=teal] 
(tri cs:x=2,y=3) -- (tri cs:x=2-1,y=3) --                                  (tri cs:x=2-1,y=3+1)-- cycle;      
\draw[ pattern=north east lines, pattern color=teal] 
(tri cs:x=3,y=3) -- (tri cs:x=3-1,y=3) --                                  (tri cs:x=3-1,y=3+1)-- cycle;    
\draw[ pattern=north east lines, pattern color=teal] 
(tri cs:x=3,y=1) -- (tri cs:x=3-1,y=1) --                                  (tri cs:x=3-1,y=1+1)-- cycle;      
\draw[ pattern=north east lines, pattern color=teal] 
(tri cs:x=4,y=1) -- (tri cs:x=4-1,y=1) --                                  (tri cs:x=4-1,y=1+1)-- cycle;   
\draw[ pattern=north east lines, pattern color=teal] 
(tri cs:x=4,y=2) -- (tri cs:x=4-1,y=2) --                                  (tri cs:x=4-1,y=2+1)-- cycle;   
\draw[ pattern=north east lines, pattern color=teal] 
(tri cs:x=2,y=3) -- (tri cs:x=2+1,y=3) --                                  (tri cs:x=2+1,y=3-1)-- cycle;    
\draw[ pattern=north east lines, pattern color=teal] 
(tri cs:x=2,y=2) -- (tri cs:x=2+1,y=2) --                                  (tri cs:x=2+1,y=2-1)-- cycle;    
\draw[ pattern=north east lines, pattern color=teal] 
(tri cs:x=3,y=2) -- (tri cs:x=3+1,y=2) --                                  (tri cs:x=3+1,y=2-1)-- cycle;    
\draw[ pattern=north east lines, pattern color=teal] 
(tri cs:x=1,y=2) -- (tri cs:x=1+1,y=2) --                                  (tri cs:x=1+1,y=2-1)-- cycle;      
\draw[ pattern=north east lines, pattern color=teal] 
(tri cs:x=1,y=3) -- (tri cs:x=1+1,y=3) --                                  (tri cs:x=1+1,y=3-1)-- cycle;      
\draw[ pattern=north east lines, pattern color=teal] 
(tri cs:x=1,y=4) -- (tri cs:x=1+1,y=4) --                                  (tri cs:x=1+1,y=4-1)-- cycle;      
\end{tikzpicture}
		\caption{A tile (completely filled in yellow) and its neighbours (marked with teal crosses) in the three regular tessellations of the plane.}
		\label{fig:neighbourhood_tiles}
	\end{figure}
	
	Now we can focus on the constraints necessary to express our problem. The central constraint underpinning the idea of the whole model is that for every tile $l$ and all of its neighbours $l^\prime$, we have
	\begin{equation*}
		x_l \implies x_{l^\prime} \lor y_{l^\prime}.
	\end{equation*}
	
	So if a tile is in the interior of the fence, its neighbours also have to be in the interior or be on the fence itself. The implication can be translated to
	\begin{equation*}
		\neg x_l \lor x_{l^\prime} \lor y_{l^\prime},
	\end{equation*}
	and this can be converted to a linear inequality:
	\begin{equation*}
		(1 - x_l) + x_{l^\prime} + y_{l^\prime} \geq 1.
	\end{equation*}
	
	To ensure that the fence is closed, we require that all tiles on the exterior edge of our bounding region $R$ are neither interior nor part of the fence itself, so for all such tiles $l$ we just add $x_l = 0$ and $y_l = 0$ to our model.
	
	We also need to ensure that no tile is in the interior of the fence and also part of the fence, so for every tile $l$ we add
	\begin{equation*}
		x_l + y_l \leq 1.
	\end{equation*}
	
	For every tile $l$, we construct the set $A_l$ of all entries $z_{l^\prime, p, r}$, where placing the $p$-th polyform with its left upper corner on $l^\prime$ in orientation $r$ covers $l$. We can thus ensure that no two polyforms overlap by setting
	\begin{equation*}
		\sum_{j \in A_l} j \leq 1.
	\end{equation*}
	
	We also know that a tile is on the fence if and only if a polyform covers it, and that is converted in the following equation
	\begin{equation*}
		\sum_{j \in A_l} j = y_l.
	\end{equation*}
	
	As a last constraint, we need to ensure that every polyform is placed exactly once:
	\begin{equation*}
		\sum_{l \in R} \sum_{r\in S_T} z_{l, p, r} = 1.
	\end{equation*}
	
	Combining all the constraints discussed above, we get the following ILP model.
	\begin{equation}\label{eq:ILP}
		\begin{aligned}
			(\mathrm{ILP}) \quad
			& \text{maximize}   &&  \sum_{l\in R} x_l\\
			& \text{subject to} && \mathbf{x} \in \{0,1\}^{R}, \mathbf{y} \in \{0,1\}^{R}, \mathbf{z} \in \{0,1\}^{R, P, s_T}, \\
			& \quad\text{(Interior Spread)} &&  (1 - x_l) + x_{l^\prime} + y_{l^\prime} \geq 1,\quad \text {for all neighbour pairs } (l, l^\prime)\\
			& \quad\text{(Interior contained)} &&  x_l = 0, y_l = 0,\quad \text {for all edge tiles } l\\
			& \quad\text{(Fence and Interior)} &&  x_l + y_l \leq 1,\quad \text {for all tiles } l\\
			& \quad\text{(Overlap)} &&  \sum_{j \in A_l} j \leq 1,\quad \text {for all tiles } l\\
			& \quad\text{(Fence Implication)} &&  \sum_{j \in A_l} j = y_l,\quad \text {for all tiles } l\\
			& \quad\text{(Placed once)} &&  \sum_{l\in R} \sum_{r\in S_T} z_{l, p, r} = 1,\quad \text {for all polyforms } p.\\
		\end{aligned}
	\end{equation}
	
	An implementation of this model can be found here~\cite{LRMRgithub}.  It finds all valid solutions in less than an hour for the three tessellations of the plane; see Theorem~\ref{thm:tab:number_of_solutions}.

	 A small comment for the experts: this implementation does not enforce that the fence area is connected. The constraint can be added, but it slows down the ILP program. Heuristically, we can suppose that the maximal area will be connected, so removing the constraint should not matter, but formally it means we implement an upper bound search. Combining this upper bound with the example solution presented in Figure~\ref{fig:sol_max_pento} proves Theorem~\ref{thm:maxpento}.
	\begin{challenge}
		Implement the ILP model~\eqref{eq:ILP} for the pentominoes on your favourite ILP solver.
	\end{challenge}
	
	\begin{challenge}
		Recently, SAT solvers successfully attacked many different combinatorial problems and are able to generate verifiable proofs. Try to formally verify the optimality of the 128 solution using a modern (max)SAT solver.
	\end{challenge}
	
	\section{Conclusion and even more challenges}
	
	We hope that you found some inspiration in the challenges we proposed throughout the text. We encourage you to select a subset of them to share the next time you want to invite others to experience a beautiful piece of mathematics. In our experience, these challenges are well loved by people of all backgrounds, regardless of their prior mathematical knowledge. In fact, this approach fits within a broader framework of sharing contemporary mathematical research through games; an approach we warmly encourage you to explore, even within your own research~\cite{LRRR24}.

	\subsection{Three last challenges}
	To close this final chapter, we leave you with three final unsolved challenges.
	
	Challenge~\ref{challenge:hexomino_fence} highlights both the strengths and limitations of the ILP formulation~\eqref{eq:ILP}. We consider the hexomino fence problem, which involves 35 distinct hexominoes. Due to the combinatorial explosion of the fence configurations, the problem quickly becomes intractable for the solver to handle efficiently. Nonetheless, our solver was able to produce a fence that encloses an area of  1586, a value we believe to be very close to the global maximum, likely within 20 tiles (see Figure~\ref{fig:hexamono_sol_better}). Achieving this area of 1586 required approximately two months of computation using our algorithm. At present, identifying the true global maximum remains beyond reach given our current computational resources and methods.

	\begin{challenge}\label{challenge:hexomino_fence}
		Can you find a hexomino fence that encloses more than  1586 tiles? Our current best attempt, shown in Figure~\ref{fig:hexamono_sol_better}, encloses an area of 1586.
	\end{challenge}
	\begin{figure}[h]
		\centering
		\begin{tikzpicture}[scale=0.5*\scfa,yscale=-1]
			\draw[help lines] (-1,-2) grid (49,46);
			\draw[thick,fill=\couleura ,fill opacity=\opacite] (12,43) -- (11,43) -- (11,42) -- (10,42) -- (10,40) -- (11,40) -- (11,41) -- (12,41) -- (12,40) -- (13,40) -- (13,42) -- (12,42)-- cycle;
			\draw[thick,fill=\couleurb ,fill opacity=\opacite] (11,1) -- (11,4) -- (12,4) -- (12,3) -- (14,3) -- (14,2) -- (13,2) -- (13,1) -- cycle;
			\draw[thick,fill=\couleurc ,fill opacity=\opacite] (36,2) |- (37,3) |- (38,4) |- (39,3) |- (40,4) |- cycle;
			\draw[thick,fill=\couleurd ,fill opacity=\opacite] (47,25) |- (46,26) |- (47,29) |- (48,28)  |- cycle;
			\draw[thick,fill=\couleurF ,fill opacity=\opacite] (13,41) |- (14,43) |- (15,44) |- (17,43) |- (14,42) |- cycle;
			\draw[thick,fill=\couleurf ,fill opacity=\opacite] (32,1) |- (34,2) |- (35,4) |- (36,3) |- (35,2) |- cycle;
			\draw[thick,fill=\couleurg ,fill opacity=\opacite] (15,1) |- (14,2)  |- (16,3) |- (17,2) |- (18,3) |- cycle;
			\draw[thick,fill=\couleurh ,fill opacity=\opacite] (7,38) |- (8,41)  |- (9,40) |- (10,41) |- (8,39) |- cycle;
			\draw[thick,fill=\couleuri ,fill opacity=\opacite] (47,19) |- (48,25)  |- cycle;
			\draw[thick,fill=\couleurj ,fill opacity=\opacite] (46,29) |- (45,31)  |- (46,33) |- (48,32) |- (47,31)|- cycle;
			\draw[thick,fill=\couleurk ,fill opacity=\opacite] (22,43) |- (24,44)  |- (25,45) |- (26,44) |- (24,43) |- (23,42) |- cycle;
			\draw[thick,fill=\couleurl ,fill opacity=\opacite] (26,43) |- (31,44)  |- (30,42) |- cycle;
			\draw[thick,fill=\couleurM ,fill opacity=\opacite] (4,36) |- (7,39)  |- (6,38) |- (5,37) |- cycle;
			\draw[thick,fill=\couleurm ,fill opacity=\opacite] (40,3) |- (41,4)  |- (42,5) |- (44,6) |- (43,5) |- (42,4) |- cycle;
			\draw[thick,fill=\couleurN ,fill opacity=\opacite] (17,42) |- (19,43)  |- (22,44) |- (20,43) |- cycle;
			\draw[thick,fill=\couleurn ,fill opacity=\opacite] (46,14) |- (47,16)  |- (48,19) |- (47,15)|- cycle;
			\draw[thick,fill=\couleuro ,fill opacity=\opacite] (35,40) |- (38,42)   |- cycle;
			\draw[thick,fill=\couleurP ,fill opacity=\opacite] (1,12) |- (2,16)  |- (3,14) |- cycle;
			\draw[thick,fill=\couleurp ,fill opacity=\opacite] (1,16) |- (0,17)  |- (1,19) |- (2,20) |-  cycle;
			\draw[thick,fill=\couleurq ,fill opacity=\opacite] (41,38) |- (43,40)  |- (44,39) |- (43,37) |-  cycle;
			\draw[thick,fill=\couleurr ,fill opacity=\opacite] (1,28) |- (0,29)  |- (1,30) |- (3,32) |- (2,31) |- cycle;
			\draw[thick,fill=\couleurs ,fill opacity=\opacite] (2,32) |- (3,34)  |- (5,36) |- (4,35) |- (3,33) |- cycle;
			\draw[thick,fill=\couleurT ,fill opacity=\opacite] (45,10) |- (46,11)  |- (47,14) |- (48,11) |- cycle;
			\draw[thick,fill=\couleurt ,fill opacity=\opacite] (18,2) |- (19,1)  |- (20,-1) |- (22,1) |- cycle;
			\draw[thick,fill=\couleurU ,fill opacity=\opacite] (0,20) |- (2,24)  |- (1,23) |- (2,21) |- cycle;
			\draw[thick,fill=\couleuru ,fill opacity=\opacite] (4,5) |- (5,8)  |- (6,6) |- (7,7) |- cycle;
			\draw[thick,fill=\couleurv ,fill opacity=\opacite] (43,6) |- (45,7)  |- (46,10) |- cycle;
			\draw[thick,fill=\couleurW ,fill opacity=\opacite] (3,8) |- (2,9)  |- (3,12) |- (4,10) |- (5,9) |- cycle;
			\draw[thick,fill=\couleurw ,fill opacity=\opacite] (31,42) |- (32,43)  |- (33,44) |- (34,43) |- (35,42) |- (33,41)|- cycle;
			\draw[thick,fill=\couleurX ,fill opacity=\opacite] (1,24) |- (0,26)  |- (1,27) |- (2,28) |-  (3,27) |- (2,26) |- cycle;
			\draw[thick,fill=\couleurx ,fill opacity=\opacite] (38,40) |- (39,41)  |- (40,42) |- (41,41) |- (39,39) |- cycle;
			\draw[thick,fill=\couleurY ,fill opacity=\opacite] (22,2) |- (24,1) |- (25,0) |- (27,1) |- cycle;
			\draw[thick,fill=\couleury ,fill opacity=\opacite] (27,1) |- (32,2) |- (31,1) |- (30,0) |- cycle;
			\draw[thick,fill=\couleurZ ,fill opacity=\opacite] (7,4) |- (8,6)  |- (11,5) |- (10,3) |- cycle;
			\draw[thick,fill=\couleurz ,fill opacity=\opacite] (45,33) |- (43,35)  |- (44,37) |- (46,36) |- cycle;
		\end{tikzpicture}
		\caption{The biggest known fence for the hexomino fence on 28/08/2025, which encloses an area of 1586. Since then, the biggest fence has been found! We will give it in the next arXiv version, once it is published.}
		\label{fig:hexamono_sol_better}
	\end{figure}
	
	And what about the worst fence configurations for solving these fence challenges? That would mean taking the set of all fence configurations, finding the maximum area that can be enclosed by each one of them, and then the minimum of these maxima would give the answer to this question. We call this the \textit{Min-max fence problem}. We already know that there are fence configurations whose maxima are 127, others 126, and others 125 or less. So we know that this number is  bounded above by $125$.
	
	\begin{challenge}\label{challenge:minmax}
		Solve the pentomino Min-max fence problem.
	\end{challenge}
	
	\begin{hint}
		To find a better bound, it suffices to exhibit a fence configuration that cannot enclose an area larger than 124. We have \textit{human-generated} evidence suggesting that this min-max value should be greater than 115: during all of our workshop and outreach activities, participants consistently managed to construct fences enclosing more than 115 tiles, regardless of the fence configuration they were given or chose themselves. A \emph{mathematician}'s intuition on these types of problems, however, indicate that probably the search space is very sparse. This means that only few of the fence configurations will be ``bad''. So, an informed human, as we trust our reader to be by now, would be to try to find a particularly bad fence configuration and prove it cannot make a fence enclosing 115 tiles for example!
	\end{hint}

	Finally, all the work we have done so far has focused on polyforms in the plane. However, nothing prevents us from extending these ideas to higher dimensions. Instead of finding fences that divide the plane into two regions, we now consider fences that separate the $d$-dimensional space into two \emph{disconnected} regions. In higher dimensions, this disconnectedness that we are referring to becomes a more subtle notion. For instance, in the three-dimensional cubical tessellation, a cubical fence separates space into two polycubes that do not share any vertices; hence, they also do not share edges or faces.
	
	\begin{challenge}\label{challenge:pentocube}
		Construct all pentacubes, 3D polyforms made of five unit cubes, and use them to determine the largest volume that can be enclosed using these shapes.
	\end{challenge}

	\addcontentsline{toc}{section}{Acknowledgements}
	
	\section*{Acknowledgments}
	We would like to thank Dr. Erich Fuchs, who has been challenging students at math seminars with the pentomino problem for many years and thus provided the original inspiration for the project. We also wish to thank Tamara Sprinkle for her professional translation of the article~\cite{Ta78}.
	
	ALR wishes to acknowledge the hospitality of the Max Planck Institute for Mathematics in the Sciences during his visits and the support of ScaDS.AI Leipzig where he was employed during part of this research. Furthermore, ALR's research is funded by a postdoctoral research scholarship of the Fonds de Recherche du Québec -- Nature et Technologie [grant number 326641] and was funded by the Deutsche Forschungsgemeinschaft (DFG, German Research Foundation) under Germany's Excellence Strategy -- GZ 2047/1--2, Projekt-ID 390685813.
	
	 Finally, we thank Helmut Podhaisky and Mykhailo Lyader, as well as Günter Rote and his group Corbin Masur, Maximilian Prietzel, Matthes Norden, Frederik Safner, Lennard Scharein, and Daniel Yu for their interest in this work and sharing their solutions to the challenges. We  thank Günter Rote furthermore for his comments on the German version and catching inaccuracies and typos, which we corrected here.
	
	\printbibliography
	\addresseshere
	
	\newpage
	
	\appendix

	\section{Printable tessellation boards and pieces}\label{app:fences}
	
	\subsection{ The square tessellation board}
	\[
	\begin{tikzpicture}[scale=\scfa]
		\draw[help lines] (1,1) grid (20,20);
	\end{tikzpicture}
	\]	
\begin{center}
		\begin{tikzpicture}[scale = \scfa,baseline={(current bounding box.center)}]
		\draw  (0,5) node[left] {\ScissorRight};
		\pic[scale=\scfa] at (1,1) {tetrominoI};
		\draw (1.5,1) node[below] {\polfont i};
		\pic[scale=\scfa] at (4,1) {tetrominoL};
		\draw (4.5,1) node[below] {\polfont l};
		\pic[scale=\scfa] at (7,1) {tetrominoN};
		\draw (8.5,1) node[below] {\polfont n};
		\pic[scale=\scfa] at (11,1) {tetrominoO};
		\draw (12.5,1) node[below] {\polfont o};
		\pic[scale=\scfa] at (14,1) {tetrominoT};
		\draw (15.5,1) node[below] {\polfont t};
		\end{tikzpicture}	

	\begin{tikzpicture}[scale=\scfa]
	\draw  (-1,8) node[left] {\ScissorRight};
	\pic[scale=\scfa] at (0,0) {pentominoF};
	\draw (1.5,0) node[below] {\polfont F};
	\pic[scale=\scfa] at (4,0) {pentominoI};
	\draw (4.5,0) node[below] {\polfont I};
	\pic[scale=\scfa] at (8,0) {pentominoL};
	\draw (8.5,0) node[below] {\polfont L};
	\pic[scale=\scfa] at (12,0) {pentominoN};
	\draw (12.5,0) node[below] {\polfont N};
	\pic[scale=\scfa] at (15,0) {pentominoP};
	\draw (15.5,0) node[below] {\polfont P};
	\pic[scale=\scfa] at (19,0) {pentominoT};
	\draw (20.5,0) node[below] {\polfont T};
	\pic[scale=\scfa] at (0,6) {pentominoU};
	\draw (1.5,10) node[below] {\polfont U};
	\pic[scale=\scfa] at (4,6) {pentominoV};
	\draw (5.5,10) node[below] {\polfont V};
	\pic[scale=\scfa] at (8,6) {pentominoW};
	\draw (9.5,10) node[below] {\polfont W};
	\pic[scale=\scfa] at (12,6) {pentominoX};
	\draw (13.5,10) node[below] {\polfont X};
	\pic[scale=\scfa] at (16,5) {pentominoY};
	\draw (16.5,10) node[below] {\polfont Y};
	\pic[scale=\scfa] at (19,5) {pentominoZ};
	\draw (20.5,10) node[below] {\polfont Z};
	\end{tikzpicture}    
\end{center}
\newpage
	
\subsection{The hexagonal tessellation board}
\phantom{a}\vspace{3cm}
	\[
	\begin{tikzpicture}[baseline={(current bounding box.center)},hexa/.style= {shape=regular polygon,
		regular polygon sides=6,
		minimum size=\hexscfa cm, draw,
		inner sep=0,anchor=south,
	},scale=\hexscfa]
	
	\foreach \j in {0,...,10}{%
		\ifodd\j 
		\foreach \i in {-1,...,7}{\node[hexa] (h\j;\i) at ({\j/2+\j/4},{(\i+1/2)*sin(60)}) 
			{}        
			;}        
		\else
		\foreach \i in {0,...,7}{\node[hexa] (h\j;\i) at ({\j/2+\j/4},{\i*sin(60)}) 
			{} 
			;}
		\fi}      
	\end{tikzpicture}\]
	\phantom{a}\vspace{2cm}
	\begin{center}
			\begin{tikzpicture}[hexa/.style= {shape=regular polygon,
			regular polygon sides=6,
			minimum size=\hexscfa cm, draw,
			inner sep=0,anchor=south,
		},scale=\hexscfa]
		\draw  (-1,3) node[left] {\ScissorRight};
		\foreach \j in {0,...,3}{%
			\node[hexa,fill=\couleurhexa] (h0;\j) at ({0},{(\j+1/2)*sin(60)}) {};
		}
		\node[] (h0,-1) at (0,0) {\polfont{bar}};
		\node[hexa,fill=\couleurhexb] (h2;0) at ({2+0},{(0+1/2)*sin(60)}) {};
		\node[hexa,fill=\couleurhexb] (h2;1) at ({2+0},{(1+1/2)*sin(60)}) {};
		\node[hexa,fill=\couleurhexb] (h2;2) at ({2+0},{(2+1/2)*sin(60)}) {};
		\node[hexa,fill=\couleurhexb] (h3;2) at ({2+3/4},{2*sin(60)}) {};
		\node[] (h2,-1) at (2,0) {\polfont{pistol}};
		\node[hexa,fill=\couleurhexc] (h4;0) at ({4+0},{(0+1/2)*sin(60)}) {};
		\node[hexa,fill=\couleurhexc] (h4;1) at ({4+0},{(1+1/2)*sin(60)}) {};
		\node[hexa,fill=\couleurhexc] (h4;2) at ({4+0},{(2+1/2)*sin(60)}) {};
		\node[hexa,fill=\couleurhexc] (h5;3) at ({4+3/4},{3*sin(60)}) {};
		\node[] (h2,-1) at (4,0) {\polfont{worm}};
		\node[hexa,fill=\couleurhexd] (h6;0) at ({6+3/4},{(2)*sin(60)}) {};
		\node[hexa,fill=\couleurhexd] (h6;1) at ({6+0},{(0+1/2)*sin(60)}) {};
		\node[hexa,fill=\couleurhexd] (h6;2) at ({6+0},{(1+1/2)*sin(60)}) {};
		\node[hexa,fill=\couleurhexd] (h6;3) at ({6+3/4},{3*sin(60)}) {};
		\node[] (h2,-1) at (6,0) {\polfont{wave}};
		\node[hexa,fill=\couleurhexe] (h8;0) at ({8+3/4},{1*sin(60)}) {};
		\node[hexa,fill=\couleurhexe] (h8;1) at ({8+0},{(0+1/2)*sin(60)}) {};
		\node[hexa,fill=\couleurhexe] (h8;2) at ({8+0},{(2+1/2)*sin(60)}) {};
		\node[hexa,fill=\couleurhexe] (h8;3) at ({8+3/4},{2*sin(60)}) {};
		\node[] (h2,-1) at (8,0) {\polfont{arc}};
		\node[hexa,fill=\couleurhexf] (h0;0) at ({10+3/4},{1*sin(60)}) {};
		\node[hexa,fill=\couleurhexf] (h0;1) at ({10+0},{(0+1/2)*sin(60)}) {};
		\node[hexa,fill=\couleurhexf] (h0;2) at ({10+0},{(1+1/2)*sin(60)}) {};
		\node[hexa,fill=\couleurhexf] (h6;3) at ({10+3/4},{2*sin(60)}) {};
		\node[] (h2,-1) at (10,0) {\polfont{bee}};
		\node[hexa,fill=\couleurhexg] (h0;0) at ({13-3/4},{2*sin(60)}) {};
		\node[hexa,fill=\couleurhexg] (h0;1) at ({13+0},{(0+1/2)*sin(60)}) {};
		\node[hexa,fill=\couleurhexg] (h0;2) at ({13+0},{(1+1/2)*sin(60)}) {};
		\node[hexa,fill=\couleurhexg] (h6;3) at ({13+3/4},{2*sin(60)}) {};
		\node[] (h2,-1) at (13,0) {\polfont{propeller}};
		\end{tikzpicture}
	\end{center}
	
	\newpage
	\subsection{The triangle tessellation board}
	\phantom{a}\vspace{2cm}
	\[
	\begin{tikzpicture}[scale=\triscfa]
		\newcommand{\nbx}{15}
		\newcommand{\nby}{15}
		
		\clip (tri cs: x=6,y=0) -- (tri cs:x=14,y=0) -- (tri cs:x=14,y=8) -- (tri cs: x=8,y=14) -- (tri cs: x=0,y=14) -- (tri cs: x=0,y=6) -- cycle; 
		
		\foreach \j in {0,...,\the\numexpr\nby-1}
		{\foreach \i in {0,...,\the\numexpr\nbx-1} 
			{
		}}
		\foreach \j in {0,...,\the\numexpr\nby-1}
		{\foreach \i in {0,...,\the\numexpr\nbx-1} 
			{
				\draw (tri cs: x=\i,y=\j) -- (tri cs:x=\i,y=\nby-1);
				\draw (tri cs: x=\i,y=\j) -- (tri cs:x=\nbx-1,y=\j);
				\draw (tri cs: x=\i,y=\j) -- (tri cs:x=\j,y=\i);
		}    } 
	\end{tikzpicture}
	\]
	\phantom{a}\vspace{2cm}
	\begin{center}
				\begin{tikzpicture}[scale=\triscfa]
		\draw  (-2,5) node[left] {\ScissorRight};
		\pic[scale=\triscfa,rotate=60] at (0,2) {hexiaRhomboid};
		\node[] at (0,0) {\polfont{rhomboid}};
		\pic[scale=\triscfa,rotate=60] at (4,2) {hexiaCrook};
		\node[] at (4,0) {\polfont{crook}};
		\pic[scale=\triscfa,rotate=60] at (8,2) {hexiaCrown};
		\node[] at (8,0) {\polfont{crown}};
		\pic[scale=\triscfa,rotate=60] at (12,2) {hexiaSphinx};
		\node[] at (12,0) {\polfont{sphinx}};
		\pic[scale=\triscfa,rotate=60] at (16,2) {hexiaSnake};
		\node[] at (16,0) {\polfont{snake}};
		\pic[scale=\triscfa,rotate=60] at (20,2) {hexiaYacht};
		\node[] at (20,0) {\polfont{yacht}};
		\pic[scale=\triscfa,rotate=60] at (0,-4) {hexiaChevron};
		\node[] at (0,-6) {\polfont{chevron}};
		\pic[scale=\triscfa,rotate=60] at (4,-4) {hexiaSignpost};
		\node[] at (4,-6) {\polfont{signpost}};
		\pic[scale=\triscfa,rotate=60] at (8,-4) {hexiaLobster};
		\node[] at (8,-6) {\polfont{lobster}};
		\pic[scale=\triscfa,rotate=60] at (12,-4) {hexiaShoe};       
		\node[] at (12,-6) {\polfont{shoe}};
		\pic[scale=\triscfa,rotate=60] at (15.5,-4) {hexiaHexagon};
		\node[] at (15.5,-6) {\polfont{hexagon}};
		\pic[scale=\triscfa,rotate=60] at (20.5,-4) {hexiaButterfly};
		\node[] at (20,-6) {\polfont{butterfly}};
		\end{tikzpicture}
	\end{center}

\end{document}